\theoremstyle{plain}
\newtheorem{theorem}{Theorem}
\theoremstyle{plain}
\newtheorem{lem}{Lemma}
\newtheorem{proposition}{Proposition}
\theoremstyle{definition}
\newtheorem{defi}{Definition}
\newtheorem*{rem}{Remark}
\newtheorem{ex}{Example}
\newcommand{\Levy}{L\'{e}vy }
\newcommand{\levy}{L\'{e}vy}
 \newcommand{\ca}{c\`adl\`ag}
\newcommand{\LSS}{\ensuremath{\mathcal{LSS}} }
\newcommand{\lss}{\ensuremath{\mathcal{LSS}}}
\newcommand{\V}{\ensuremath{\mathcal{VMLV}} }
\newcommand{\vv}{\ensuremath{\mathcal{VMLV}}}
\newcommand{\VB}{\ensuremath{\mathcal{VMBV}} }
\newcommand{\vvb}{\ensuremath{\mathcal{VMBV}}}
\newcommand{\R}{\ensuremath{\mathbb{R}}}
\newcommand{\E}{\ensuremath{\mathbb{E}}}
\newcommand{\Lp}{\ensuremath{L^{(p)}}}
\begin{document}
\title 
{On stochastic integration for volatility modulated L\'{e}vy--driven Volterra  processes}
\date{\today}
\author
{Ole E. Barndorff-Nielsen\footnote{Thiele Center, 
Department of Mathematical Sciences,
\&  CREATES,
 School of Economics and Management,
Aarhus University,
Ny Munkegade 118,
DK-8000 Aarhus C, Denmark,
{\tt oebn\@@imf.au.dk}}\\
Aarhus University
\and Fred Espen Benth\footnote{Centre of Mathematics for Applications,
 University of Oslo,
 P.O. Box 1053, Blindern,
 N-0316 Oslo, Norway
{\tt fredb\@@math.uio.no}}\\University of Oslo
\and
Jan Pedersen\footnote{Thiele Center, 
Department of Mathematical Sciences,
Aarhus University, Ny Munkegade 118,
DK-8000 Aarhus C, Denmark,
{\tt jan\@@imf.au.dk}}\\ Aarhus University
 \and Almut E.~D.~Veraart\footnote{Department of Mathematics, 
Imperial College London, 180 Queen's Gate,
SW7 2AZ London, 
 UK, {\tt a.veraart\@@imperial.ac.uk}}\\ Imperial College London \& CREATES}

\maketitle

\begin{abstract}
This papers develops a stochastic integration theory with respect to volatility modulated L\'{e}vy--driven Volterra ($\mathcal{VMLV}$) processes. It extends recent results in the literature to allow for stochastic volatility and pure jump processes in the integrator. The new integration operator is based on Malliavin calculus and describes an anticipative integral. 
 Fundamental properties of the integral are derived and important applications are given.

\end{abstract}
{\bf Keywords: } Volatility modulated Volterra process, L\'evy semistationary processes, stochastic integration, Skorohod integral, Malliavin calculus
\newpage
\section{Introduction}
\subsection{Motivation}
This paper establishes a stochastic integration theory for  volatility modulated Volterra ($\mathcal{VMLV}$)  processes which are  defined as 
\begin{align}\label{firstV}
X(t) = \int_0^t g(t,s ) \sigma(s) dL(s),
\end{align}   
where $g$ is a deterministic function, $\sigma$ is a predictable stochastic process and $L$ is a L\'{e}vy process. The precise regularity assumptions needed to ensure that the integral in (\ref{firstV}) is well-defined are given in the following Subsection.

Our research is closely related to the influential work by \cite{AMN} and \cite{D,Decreusefond2005}, who developed a  stochastic calculus for Gaussian processes. 
Our new contributions include in particular a stochastic integration theory with respect to Volterra processes which include a \emph{stochastic volatility/intermittency} component, which is denoted by $\sigma$ in \eqref{firstV}. As such we extend the results by \cite{AMN} in the Gaussian case to allow for volatility modulation. Moreover, we also develop a suitable stochastic integration theory for the case that $L$ is a pure jump L\'{e}vy process. Also  these new results hold in the presence of stochastic volatility. Furthermore, our proposed integral is of a slightly different form than the one suggested in \cite{AMN}.
In addition to developing the new integration type, we establish the main properties of our new integral, and we focus on a variety of interesting examples. E.g.~we study integration of deterministic kernel functions with respect to Volterra processes and we investigate the case when the integrand is itself a Volterra process, which leads to a representation result in terms of the second chaos expansion.

Due to the flexibility the class of \V offers, it can be applied to modelling a wide range of phenomena. E.g.~\V processes have recently been found to be suitable for e.g.~modelling turbulence in physics, see \cite{BNSch09}. Also, they can be used to model financial data, such as commodity spot prices, as recently shown in 
\cite{BNBV2010a}. In many applications, it is important to have a stochastic integration theory at hands, e.g.~in financial applications one typically models the value of a portfolio by a stochastic integral with respect to the price process. This paper aims at establishing the suitable integration theory which can then be used in various such applications.

The outline for the remaining part of the paper is as follows. First we review the technical assumptions needed to define integrals of the type  \eqref{firstV}. Next, since  Malliavin calculus both for Brownian motion  and for pure  jump L\'{e}vy processes is the key tool for establishing our integration theory w.r.t.~\V processes, we give a short review of main results from that calculus in Section 
\ref{SectMalliavin}.   Section 
\ref{SectVMLV} introduces the class of volatility modulated Volterra processes in more detail and describes relevant examples. 
The main contributions of the paper are contained in Section 
\ref{SectStochasticIntegration}, where we establish the integration concept with respect to volatility modulated Volterra processes -- both in the case when the process is driven by a Brownian motion and also in the case of a pure jump L\'{e}vy process. Next we derive the key properties of the new integral in Section 
\ref{RefPropApplications} and we focus on various applications of the new concept. Finally,  Section \ref{SectConcl} concludes and gives an outlook on future research.

\subsection{Preliminaries}\label{SectPrel}
Let  $(\Omega,\mathcal{F}, P)$ be a complete probability space equipped with a  filtration $\mathfrak{F}=(\mathcal{F}_t)_{t \geq 0}$ satisfying the usual conditions of right-continuity and completeness. 

We aim to develop a stochastic integration concept with respect to volatility modulated L\'{e}vy--driven ($\mathcal{VMLV}$) processes. In order to set the scene we first review the suitable integration concept for 
 proper  integrals with respect to a 
\Levy process $L$, i.e.\ integrals of the form 
\begin{equation}\label{J1}
\int_{0}^\infty  \phi(t)\,  d L(t),
\end{equation}
where $\phi=(\phi(t))_{t\in \R}$ is a   predictable integrand. Clearly, by replacing $\phi(t)$ by    $\phi(t) \mathbb{I}_{\{a<t\leq b\}}$ for $0\leq a <b<\infty$ we are able to treat integrals over the intervals 
 $(a,b]$ within this framework as well. Relevant references in this context include  
\cite{Kallenberg1975},
\cite{RajputRosinski},
\cite{Kallenberg1992},
\cite{D},
\citet[Theorem III.6.30
]{JS2003},
\cite{ChernyShiryaev2005},
\cite{DNMBOP},
\cite{P},
\cite{N},
\cite{BassePedersen2010}.

Let us recall some key properties of 
\Levy processes which we will apply throughout the paper.
\begin{defi}
 A stochastic process   $L=(L(t))_{t\geq 0}$ is a \emph{\Levy process} if it satisfies the following five conditions:
(1) $L(0)=0$ a.s.;
(2) $L$ has independent increments in the  sense that for any choice of  $n\geq 2$ and $t_1 <\cdots <t_n$ the increments $L(t_2) - L(t_1), \ldots, L({t_n}) - L(t_{n-1})$ are independent; 
(3) $L$ has stationary increments in the sense that
  for all $s<t$,  $L(t)-L(s)$ has the same law as $L(t-s)$;
  (4) $L$ is continuous in probability;
(5) $L$ has \ca\   paths.
\end{defi}
The \Levy process  $L$ has   a 
\levy--Khintchine representation, which is given by 
\begin{align}\label{LevyK}
\E(\exp(i u L(t))) = \exp(t \psi(u)), \;\; 
\psi(u) =  i\gamma u - \frac{c^2 u^2}{2}  +
\int_{\R}(e^{iuz}-1-iu\tau(z))\, \ell(dz),
\end{align}
where $\tau$ is a truncation function which is  assumed to behave like the identity function around the origin. More precisely, a standard choice would be
$\tau(z)= z\mathbb{I}_{\{|z| \leq  1\}}$. Further, $\gamma \in \mathbb{R}$, $c^2 \geq 0$ and
 $\ell$ denotes the \Levy measure of $L$ satisfying $\ell(\{0\})=0$ and $\int_{\mathbb{R}}(1\wedge |z|^2)\,\ell(dz) < \infty$. We write $(\gamma, c^2 , \ell)$ for the characteristic triplet of $L$.
Recall that we have the following \levy--It\^o--representation for $0\leq s<t$: 
\begin{align}\label{LevyIto}
L(t) - L(s) &= (t-s)\gamma  + c (B(t)- B(s)) + \int_s^t\int_{\R}\tau(z)\, \widetilde N(dz,du)
 \\
  &\ \ 
+ \int_s^t\int_{\R}(z-\tau(z))\, N(dz,du), \nonumber 
\end{align}
where $N$ denotes the corresponding Poisson random measure  and $\widetilde N(dz,du) = N(dz,du)-\ell(dz)du$ denotes the compensated Poisson random measure. Moreover, $B$ is a standard  Brownian motion which is independent of $N$. 
 
\subsubsection{Stochastic integration with respect to \Levy processes}
 Next we review briefly how the integral in \eqref{firstV} is defined.
 For \emph{deterministic} integrands one can apply the integration concept developed by \cite{RajputRosinski} for \Levy bases as integrators,  using the fact that 
\Levy processes can be directly linked to homogeneous \Levy bases, which are defined as homogeneous, infinitely divisible, independently scattered random measures, see  e.g.~\cite{RajputRosinski,Sato2004,BNSch07a}.

 The integration theory can be extended to allow for stochastic integrands as follows.
Recall that $\mathfrak{F}=(\mathcal{F}_t)_{t\in \R}$ denotes 
a filtration satisfying the usual conditions. Let $\mathcal{P}$ denote the predictable $\sigma$-field on $\R\times \Omega$, i.e.
\begin{align*}
\mathcal{P} = \sigma\left( (s, t]\times A: -\infty < s < t < \infty, A \in \mathcal{F}_s \right)
\end{align*}
which is  the smallest sigma-field generated by the left-continuous and adapted processes.

Assume that  
$L=(L_t)_{t\geq 0}$ is a 
\Levy process 
with respect to 
$\mathfrak{F}$ 
 with characteristic triplet $(\gamma, \sigma^2, \ell)$.  That is, 
\begin{equation}\label{J5}
 \mbox{for all $0\leq s<t$,   $L(t) - L(s)$ is independent of $\mathcal{F}_s$.}
 \end{equation}
As shown in \citet{BassePedersen2010}, see also \cite{ChernyShiryaev2005},  the integral 
$\int_{0}^{\infty}\phi(s)d L(s)$ exists 
if and only if $\phi$ is predictable and  the following three conditions are satisfied $a.s.$: 
 \begin{align}\begin{split}
  \label{J234}
\int_{0}^\infty \sigma^2\phi^2(s) \,d
    s<\infty, \qquad
 \int_{0}^\infty \int_{\R}
 \left(1\wedge (\phi(s) z)^2\right)\,  \ell(d z)\,d s<\infty,\\ 
 \int_{0}^\infty  \Big|\phi(s)\gamma+\int_{\R}
 \big[\tau(\phi(s)z)-\phi(s)
 \tau(z)\big]\,\ell(d  z)\Big|\,d s<\infty.
 \end{split}
 \end{align}

\subsubsection{Important examples}
We will now focus on some important
examples, 
 where the integrability conditions stated in \eqref{J234} simplify considerably. 
\begin{ex}\label{Jex1} 
If  in addition to \eqref{J5},  
 for all $s<t$,  $L(t) - L(s)$  has zero mean, is square-integrable, and  the predictable process $\phi$ satisfies $\E(\int_{0}^\infty \phi(s)^2ds)<\infty$, then \eqref{J1} exists, is square-integrable and $\int_{0}^t \phi_sd L_s$ is a square-integrable martingale up to infinity, see \citet{BassePedersen2010}  for more details. 
\end{ex} 
\begin{ex}
Suppose $L$ is a 
Brownian motion with drift. Then the characteristic triplet is given by $(\gamma, \sigma^2, 0)$ and condition  \eqref{J5} is satisfied. 
Then the integrability conditions for a predictable process $\phi$ simplify to 
\begin{align*}
\sigma^2 \int_{0}^t \phi(s)^2\,  ds < \infty, \quad 
\left|\gamma\right|\int_{0}^t  \left|\phi(s) \right|\, ds < \infty \ a.s.
\end{align*}
\end{ex}
 \begin{ex} 
See  \citet[Corollary 3.6]{ChernyShiryaev2005} for a detailed treatment of the 
  case when $(L_t)_{t \geq 0}$ is an $\alpha$--stable \Levy process.
\end{ex}

\section{A brief background on Malliavin Calculus}\label{SectMalliavin}
We briefly review the key results from Malliavin Calculus needed to establish our new stochastic integration theory. In the following, our presentation is
extracted from the monographs by~\cite{N}  and~\cite{DNOP}, where we  discuss the  Wiener and the pure-jump case separately.

\subsection{The Wiener case}
\label{subsect-wienermall} 
As before,  $(\Omega,\mathcal{F},P)$ denotes a complete probability space and we let
$(\mathcal{T},\mathcal{B},\mu)$ be a measure space with $\mu$ a
$\sigma$-finite measure without atoms. The space of square-integrable
functions on 
$(\mathcal{T},\mathcal{B},\mu)$ is  denoted by $L^2(\mathcal{T})$. In our
applications, $\mathcal{T}$ is typically the finite time interval
$\mathcal{T}=[0,T]$, for some given finite time horizon  $T<\infty$, equipped with the Lebesgue measure $\mu$ on the
Borel sets $\mathcal{B}$. 

We denote by $W$  the $L^2(\Omega)$ Gaussian measure on
$(\mathcal{T},\mathcal{B})$. That is, 
for every $A\in \mathcal{B}$ with $\mu(A)<\infty$, $W(A)$ is a
centered normally distributed random 
variable with variance $\mu(A)$. Furthermore, if $A,B\in\mathcal{B}$
with $\mu(A)<\infty,\mu(B)<\infty$,  
are disjoint, then $W(A)$ and $W(B)$ 
are independent.  Assume that $\mathcal{F}$ is the $\sigma$--field generated by $W(A)$ for $A\in \mathcal{B}$ with $\mu(A)<\infty$. 

To define multiple stochastic integrals with respect to $W$, consider
elementary functions 
$f\in L^2(\mathcal{T}^n,\mathcal{B}^n,\mu^n)$ given as
$$
f(t_1,\ldots,t_n)=\sum_{i_1,\cdots,i_n=1}^{m}a_{i_1\cdots
  i_n}\mathbf{1}_{A_{i_1}\times\cdots\times
  A_{i_n}}(t_1,\ldots,t_n)\,, 
$$  
for $A_{i_j}\in\mathcal{B}$ such that $\mu(A_{i_j})<\infty$ and with
the property that $a_{i_1\cdots i_n}=0$  
if any  two of the indices $i_1,\ldots,i_n$ are equal.  
Then, we define 
$$
I_n(f)=\sum_{i_1,\cdots,i_n=1}^{m}a_{i_1\cdots i_n}W(A_{i_1})\cdots
W(A_{i_n})\,. 
$$
The set of such elementary functions is dense in $L^2(\mathcal{T}^n)$
and the operator $I_n$ can 
be extended to a linear and continuous operator from
$L^2(\mathcal{T}^n)$ to $L^2(P)$. It has the  
properties that $I_n(f)=I_n(\widetilde{f})$ if $\widetilde{f}$ is the
symmetrization of $f$, and 
$$
\E\left[I_m(f)I_n(g)\right]=\left\{\begin{array}{ll} 0, & m\neq n\,,
    \\ n!\langle\widetilde{f},\widetilde{g}\rangle_n, & m=n\,,
  \end{array}\right.
$$
where we have denoted the inner product in $L^2(\mathcal{T}^n)$ by
$\langle\cdot,\cdot\rangle_n$. The notation  
$$
I_n(f)=\int_{\mathcal{T}^n}f(t_1,\ldots,t_n)W(dt_1)\cdots W(dt_n)\,,
$$
will be frequently used.

The famous Wiener-It\^o chaos decomposition (\citet[Theorem 1.1.2]{N}) holds for the space $L^2(P)$: 
\begin{theorem}
If $X\in L^2(P)$, then there exists a unique sequence 
$\{f_n\}_{n=0}^{\infty}$ of symmetric 
functions where $f_n\in L^2(\mathcal{T}^n)$ such that
$$
X=\sum_{n=0}^{\infty}I_n(f_n)\,.
$$
Moreover,
$$
\|X\|_2^2:=\mathbb{E}[X^2]=\sum_{n=0}^{\infty}n!|f_n|^2_{n}\,.
$$
where $|\cdot|_n$ is the norm in $L^2(\mathcal{T}^n)$. 
\end{theorem}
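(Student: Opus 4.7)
The plan is to establish the decomposition in three stages: first define and collect properties of the chaos subspaces, then verify orthogonality, and finally prove the crucial density (completeness) statement; uniqueness and the norm identity will then drop out from the isometry that was already recorded just above the theorem.

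To begin, I would introduce the $n$-th Wiener chaos
\[
\mathcal{H}_n := \{I_n(f) : f \in L^2(\mathcal{T}^n)\text{ symmetric}\}, \qquad \mathcal{H}_0 := \R,
\]
and record two facts that follow immediately from the construction of $I_n$: (i) $\mathcal{H}_n$ is a closed subspace of $L^2(P)$ because $I_n$ is an isometry (up to the factor $n!$) from symmetric functions into $L^2(P)$; (ii) for $m \neq n$ one has $\mathcal{H}_m \perp \mathcal{H}_n$, which is exactly the first line of the displayed formula for $\E[I_m(f)I_n(g)]$ stated just before the theorem. Thus $\bigoplus_{n=0}^\infty \mathcal{H}_n$ is a well-defined closed subspace of $L^2(P)$, and the remaining task is to show that it exhausts $L^2(P)$.

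The heart of the proof, and what I expect to be the main obstacle, is this density statement. I would argue it via the stochastic exponentials
\[
\mathcal{E}(h) := \exp\!\Bigl(W(h) - \tfrac{1}{2}|h|_1^2\Bigr), \qquad h \in L^2(\mathcal{T}),
\]
where $W(h)$ denotes the Wiener integral of $h$. Two things must be shown. First, the linear span of $\{\mathcal{E}(h) : h \in L^2(\mathcal{T})\}$ is dense in $L^2(\Omega,\mathcal{F},P)$: if $Y \in L^2(P)$ is orthogonal to every $\mathcal{E}(h)$, then by taking real linear combinations $h = \sum t_j h_j$ with $h_j$ fixed and differentiating (or applying a standard Fourier/characteristic-function argument) one shows that $\E[Y\, g(W(h_1),\ldots,W(h_k))]=0$ for every bounded continuous $g$, whence $Y = 0$ by the assumption that $\mathcal{F}$ is generated by the $W(A)$. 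Second, each $\mathcal{E}(h)$ already lies in $\bigoplus_n \mathcal{H}_n$ by virtue of the expansion
\[
\mathcal{E}(h) = \sum_{n=0}^{\infty} \frac{1}{n!}\, I_n(h^{\otimes n}),
\]
which can be obtained by expanding the exponential series and identifying $I_n(h^{\otimes n})$ with $|h|_1^n H_n(W(h)/|h|_1)$ for the $n$-th Hermite polynomial $H_n$, together with the generating-function identity for Hermite polynomials. Convergence of this series in $L^2(P)$ follows from $\sum_n (n!)^{-1} |h|_1^{2n} < \infty$ and the isometry property of $I_n$.

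Combining the two assertions, $\bigoplus_n \mathcal{H}_n$ contains a dense set and is closed, hence equals $L^2(P)$; this yields the decomposition $X = \sum_{n\geq 0} I_n(f_n)$ with symmetric $f_n$. Pythagoras combined with $\E[I_n(f_n)^2] = n!\,|f_n|_n^2$ immediately gives the norm identity $\|X\|_2^2 = \sum_n n!\,|f_n|_n^2$, which in particular forces the series on the right to converge. Finally, uniqueness of the symmetric kernels is a direct consequence of this same norm identity: if $\sum_n I_n(f_n) = \sum_n I_n(g_n)$, then $\sum_n n!\,|f_n - g_n|_n^2 = 0$, so $f_n = g_n$ in $L^2(\mathcal{T}^n)$ for every $n$. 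The one delicate point worth flagging is that all calculations are carried out first for simple/elementary kernels and only then extended by the density and continuity of $I_n$, but this is routine once the chaos spaces have been set up.
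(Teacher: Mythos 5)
Your proof is correct and follows the classical argument --- orthogonality of the chaos subspaces $\mathcal{H}_n$, totality of the stochastic exponentials $\mathcal{E}(h)$ via an analytic-extension/characteristic-function argument, and the identification $\mathcal{E}(h)=\sum_n \frac{1}{n!}I_n(h^{\otimes n})$ through the Hermite generating function --- which is precisely the proof of \citet[Theorem 1.1.2]{N}, the source the paper cites; the paper itself offers no proof beyond this citation. Nothing further is needed.
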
  

The Malliavin derivative of a random variable $X\in L^2(P)$ can be
characterized as an 
operation acting on the chaos. The domain of the derivative can also
be characterized by 
the chaos functions. The following discussion is taken from
 \citet[Proposition 1.2.1]{N}.
Suppose $X\in  L^2(P)$ with chaos expansion 
$$
X=\sum_{n=0}^{\infty}I_n(f_n)
$$
is satisfying 
$$
\sum_{n=1}^{\infty}n n!|f_n|^2_n<\infty\,.
$$
Then we say that $X\in\mathbb{D}^{1,2}$, the domain of the Malliavin
derivative $D_t$, and 
it holds that 
\begin{equation}
D_tX=\sum_{n=1}^{\infty}nI_{n-1}(f_n(\cdot,t))\,.
\end{equation}
The Malliavin derivative is thus the operation where we fix the last
coordinate of the  
chaos function $f_n$, and moves it to a chaos of order one $n-1$. It
corresponds to the annihilation 
operator in quantum physics. A direct computation shows that
\begin{equation}
\mathbb{E}\left[\int_{\mathcal{T}}(D_tX)^2\,\mu(dt)\right]=\sum_{n=1}^{\infty}n
n!|f_n|^2_n\,. 
\end{equation} 
In particular, this proves that $D_t$ is a linear operator from
$\mathbb{D}^{1,2}$ into  
$L^2(\Omega\times \mathcal{T})$. 

The Skorohod integral is introduced as the adjoint operator of $D_t$ in the 
following way: denote by $\text{Dom}(\delta)$ the set of all processes
$Y$ in $L^2(\Omega\times\mathcal{T})$ where
\begin{equation}
\vert\mathbb{E}\left[\int_{\mathcal{T}}Y(t)D_tX\,\mu(dt)\right]\vert\leq
c\|X\|_2\,, 
\end{equation}
for all $X\in\mathbb{D}^{1,2}$, and $c>0$ being a constant depending
on $Y$. Then, 
for $Y\in\text{Dom}(\delta)$, we define the Skorohod integral of $Y$, denoted by 
\begin{equation}
\int_{\mathcal{T}}Y(t)\delta W(t)\,,
\end{equation}
as the unique element of $L^2(P)$ characterised by
\begin{equation}
\mathbb{E}\left[X\int_{\mathcal{T}}Y(t)\delta W(t)\right]=
\mathbb{E}\left[\int_{\mathcal{T}}Y(t)D_tX\,\mu(dt)\right]\,,
\end{equation}
for all $X\in\mathbb{D}^{1,2}$.

One may view the Skorohod integral as a creation operator on chaos, as
the following shows.  
It holds by the Wiener-It\^o chaos expansion that for any $Y\in
L^2(\Omega\times\mathcal{T})$  
there exists a sequence of symmetric functions $f_n(\cdot,t)$ in
$L^2(\mathcal{T}^n)$ such that 
$$
Y(t)=\sum_{n=0}^{\infty}I_n(f_n(\cdot,t))\,.
$$   
In this representation, $t$ is only considered as a parameter of the function 
$f_n(\cdot,t)$ on $\mathcal{T}^n$. Letting $\widetilde{f}_n\in
L^2(\mathcal{T}^{n+1})$ be  
the symmetrization of $f_n(\cdot,t)$, it holds that 
\begin{equation}
\int_{\mathcal{T}}Y(t)\delta
W(t)=\sum_{n=0}^{\infty}I_{n+1}(\widetilde{f}_n)\,. 
\end{equation}
Moreover, the space of Skorohod integrable processes
$\text{Dom}(\delta)$ is characterized  
by processes satisfying
$$
\sum_{n=0}^{\infty}(n+1)!|\widetilde{f}_n|_{n+1}^2<\infty\,,
$$
where this sum is equal to the variance of the Skorohod integral.

The Skorohod integral is a generalization of It\^o integration in the
case of $\mathcal{T}=[0,T]$ 
and $\mu$ being the Lebesgue measure on the Borel $\sigma$-algebra
$\mathcal{B}$. 
We have from  \citet[Proposition 1.3.4]{N} that 
$$
\int_0^TY(t)\delta W(t)=\int_0^TY(t)\,dW(t),
$$
whenever $Y$ is It\^o integrable. The essential extension offered by
the Skorohod integral is that one can define the stochastic integral
of $Y$ with 
respect to Brownian motion without restricting
to integrands $Y$ being adapted to the filtration generated by the
Brownian motion.  
Sometimes one refers to the
Skorohod integral as an {\it anticipative} stochastic integral for this reason.

Skorohod integration and Malliavin differentiation have several
interesting properties, and 
we present the ones which are relevant for our analysis of stochastic
integration. First,  
the following \lq\lq fundamental theorem of calculus\rq\rq\ holds:
\begin{proposition}
Suppose that $Y\in\text{Dom}(\delta)\cap\mathbb{D}^{1,2}$ and 
$\int_{\mathcal{T}}Y(t)\delta W(t)\in\mathbb{D}^{1,2}$. If the process
$s\mapsto D_t Y(s)$ for  
$s\in\mathcal{T}$ is Skorohod integrable for almost every
$t\in\mathcal{T}$, then 
\begin{equation}
D_t\left(\int_{\mathcal{T}}Y(s)\delta
  W(s)\right)=Y(t)+\int_{\mathcal{T}}D_tY(s)\delta W(s)\,, 
\end{equation} 
for almost every $t\in\mathcal{T}$.
\end{proposition}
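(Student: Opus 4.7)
The plan is to prove the identity by comparing the Wiener--It\^o chaos expansions of both sides. Expand $Y(s) = \sum_{n \geq 0} I_n(f_n(\cdot, s))$, where $f_n(\cdot, s) \in L^2(\mathcal{T}^n)$ is symmetric in its first $n$ arguments; the hypotheses $Y \in \mathrm{Dom}(\delta) \cap \mathbb{D}^{1,2}$ and $\int_{\mathcal{T}} Y\,\delta W \in \mathbb{D}^{1,2}$ translate into the required square-summability of the kernels $f_n$ and of their symmetrizations $\widetilde{f}_n$ viewed as functions of $n+1$ variables.

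The left-hand side unfolds directly via the creation and annihilation formulas:
\[
\int_{\mathcal{T}} Y(s)\,\delta W(s) = \sum_{n=0}^{\infty} I_{n+1}(\widetilde{f}_n), \qquad D_t\!\int_{\mathcal{T}} Y(s)\,\delta W(s) = \sum_{n=0}^{\infty} (n+1)\,I_n(\widetilde{f}_n(\cdot,t)).
\]
Using the explicit symmetrization formula and the symmetry of $f_n$ in its first $n$ coordinates, one obtains
\[
(n+1)\,\widetilde{f}_n(t_1,\ldots,t_n,t) = f_n(t_1,\ldots,t_n;t) + \sum_{i=1}^{n} f_n(t_1,\ldots,t_{i-1},t,t_{i+1},\ldots,t_n;t_i).
\]
The first summand, plugged back into the chaos, recovers exactly $Y(t) = \sum_n I_n(f_n(\cdot;t))$. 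For the second summand, I would compute $D_t Y(s) = \sum_{n\geq 1} n\,I_{n-1}(f_n(\cdot, t; s))$, apply the creation formula in the $s$-variable, and symmetrize $f_n(\cdot, t;\cdot)$ in its $n$ arguments $(t_1,\ldots,t_{n-1},s)$. Exploiting once more the symmetry of $f_n$ in its first $n$ arguments yields
\[
\int_{\mathcal{T}} D_t Y(s)\,\delta W(s) = \sum_{n=1}^{\infty} I_n\!\Big(\sum_{i=1}^{n} f_n(t_1,\ldots,t_{i-1},t,t_{i+1},\ldots,t_n;t_i)\Big).
\]
Summing the two pieces reproduces the chaos expansion of $D_t\!\int Y\,\delta W$, which establishes the identity.

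The main obstacle is not the combinatorial identity itself but the justification of the formal manipulations. One has to verify (i) that the chaos series defining $D_t Y(s)$ converges in $L^2(\Omega \times \mathcal{T}^2)$, which follows from $Y(s) \in \mathbb{D}^{1,2}$ for $\mu$-a.e.\ $s$ together with a Fubini-type argument on the kernels; (ii) the Skorohod integrability of $s \mapsto D_t Y(s)$ for $\mu$-almost every $t$, which is part of the hypothesis and ensures the right-hand side is well defined; and (iii) the termwise interchange of $D_t$ with the sum over $n$ in the expansion of $\int Y\,\delta W$, which is licensed by the assumption $\int_{\mathcal{T}} Y\,\delta W \in \mathbb{D}^{1,2}$. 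Once these three points are in place, the equality holds in $L^2(P)$ for $\mu$-a.e.\ $t$, which is the claimed conclusion.
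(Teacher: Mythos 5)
Your proposal is correct: the paper states this proposition without proof in its review section, citing \citet[Section 1.3]{N}, and your argument is exactly the classical chaos-expansion proof given there (Nualart's commutation relation $D_t\delta(u)=u(t)+\delta(D_t u)$), including the key symmetrization identity $(n+1)\widetilde{f}_n(t_1,\ldots,t_n,t)=f_n(t_1,\ldots,t_n;t)+\sum_{i=1}^n f_n(t_1,\ldots,t_{i-1},t,t_{i+1},\ldots,t_n;t_i)$. The combinatorics check out, and your points (i)--(iii) correctly identify where the hypotheses ($Y\in\mathbb{D}^{1,2}$, Skorohod integrability of $s\mapsto D_tY(s)$, and $\int_{\mathcal{T}}Y(s)\,\delta W(s)\in\mathbb{D}^{1,2}$) are used to justify the termwise manipulations.
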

The next result, which is an integration by parts formula,  will be  the key to define the stochastic integrals we are
interested in this  
paper:
\begin{proposition}[Integration by parts formula]\label{prop2}
Suppose that $Y\in\text{Dom}(\delta)$ and $X\in\mathbb{D}^{1,2}$. If
$XY\in\text{Dom}(\delta)$, 
then
\begin{equation}
\int_{\mathcal{T}}XY(t)\delta
W(t)= X\int_{\mathcal{T}}Y(t)\delta W(t)-\int_{\mathcal{T}}Y(t)D_tX\,\mu(dt). 
\end{equation}
\end{proposition}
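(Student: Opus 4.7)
The natural route is through the duality characterisation of the Skorohod integral: to prove $XY\in\mathrm{Dom}(\delta)$ has the asserted integral, it suffices to show that for every test variable $F\in\mathbb{D}^{1,2}$,
\begin{equation*}
\mathbb{E}\!\left[F\Bigl(X\!\int_{\mathcal{T}}Y(t)\,\delta W(t)-\int_{\mathcal{T}}Y(t)D_tX\,\mu(dt)\Bigr)\right]=\mathbb{E}\!\left[\int_{\mathcal{T}}XY(t)\,D_tF\,\mu(dt)\right].
\end{equation*}
So the plan is to start with the right-hand side above and transform it into the left-hand side using only (i) the product rule for $D_t$, (ii) Fubini, and (iii) the duality relation between $D$ and $\delta$ applied to $Y$.

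First I would take $F$ in a convenient dense subset of $\mathbb{D}^{1,2}$ (smooth cylindrical functionals), so that the product $XF$ automatically lies in $\mathbb{D}^{1,2}$ and the Malliavin product rule $D_t(XF)=X\,D_tF+F\,D_tX$ is immediate from the chain rule on cylindrical variables. Solving for $X\,D_tF$ and substituting gives
\begin{equation*}
\int_{\mathcal{T}}XY(t)\,D_tF\,\mu(dt)=\int_{\mathcal{T}}Y(t)\,D_t(XF)\,\mu(dt)-\int_{\mathcal{T}}FY(t)\,D_tX\,\mu(dt).
\end{equation*}
Taking expectations, the first term on the right is $\mathbb{E}[XF\int_{\mathcal{T}}Y(t)\,\delta W(t)]$ by the duality relation characterising $\int Y\,\delta W$ (using $Y\in\mathrm{Dom}(\delta)$ and $XF\in\mathbb{D}^{1,2}$), while the second term is $\mathbb{E}[F\int_{\mathcal{T}}Y(t)\,D_tX\,\mu(dt)]$; Fubini is used here to swap $\mathbb{E}$ and the $\mu$-integral. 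Rearranging yields exactly the identity displayed above.

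The final step is to upgrade from smooth cylindrical $F$ to arbitrary $F\in\mathbb{D}^{1,2}$. Approximating $F$ by cylindrical $F_n$ with $F_n\to F$ in $\mathbb{D}^{1,2}$, the right-hand side of the target duality is continuous in $F$ by Cauchy--Schwarz (since $XY\in L^2(\Omega\times\mathcal{T})$ is guaranteed by the hypothesis $XY\in\mathrm{Dom}(\delta)\subset L^2(\Omega\times\mathcal{T})$), and the left-hand side is continuous in $F$ by the $L^2$-continuity of the assumed integrands. Passing to the limit then shows the duality relation holds for all $F\in\mathbb{D}^{1,2}$, so $XY\in\mathrm{Dom}(\delta)$ with the stated Skorohod integral.

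The main technical obstacle I expect is not the algebraic identity itself, which is a one-line manipulation once the product rule is available, but the bookkeeping needed to justify it: one must ensure the integrands under the $\mathbb{E}\int_{\mathcal{T}}\cdots\,\mu(dt)$ signs are genuinely $L^1(\Omega\times\mathcal{T})$ so that Fubini applies, and that the duality for $Y$ can be invoked with test variable $XF$ (which requires $XF\in\mathbb{D}^{1,2}$). The hypothesis $XY\in\mathrm{Dom}(\delta)$ together with $X\in\mathbb{D}^{1,2}$ and $Y\in\mathrm{Dom}(\delta)$ provides exactly the integrability needed, but exhibiting an explicit approximating sequence and controlling the cross terms is where one has to be careful.
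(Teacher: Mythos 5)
Your overall route is essentially the proof the paper relies on: the paper gives no proof of its own but cites \citet[page 40]{N}, where this statement is established by exactly the duality-plus-product-rule computation you outline. The core of your proposal --- testing against smooth cylindrical $F$, writing $X\,D_tF = D_t(XF) - F\,D_tX$, applying the duality for $Y$ with test variable $XF$, and using Fubini, whose integrability requirements are supplied by $XY\in\mathrm{Dom}(\delta)\subset L^2(\Omega\times\mathcal{T})$ together with $X\in\mathbb{D}^{1,2}$ and $Y\in\mathrm{Dom}(\delta)$ --- is correct and is the standard argument.

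However, your final upgrade step does not work as written. You claim that $F\mapsto \mathbb{E}\bigl[F\,X\int_{\mathcal{T}}Y(t)\,\delta W(t)\bigr]$ is continuous in $\|F\|_2$ by Cauchy--Schwarz, but $X\int_{\mathcal{T}}Y(t)\,\delta W(t)$ is a product of two variables that are merely in $L^2(\Omega)$, hence is only guaranteed to lie in $L^1(\Omega)$; under the stated hypotheses the right-hand side of the formula need not be square integrable, so Cauchy--Schwarz gives no control and the passage to arbitrary $F\in\mathbb{D}^{1,2}$ is unjustified. Fortunately, the step is also unnecessary: $XY\in\mathrm{Dom}(\delta)$ is a \emph{hypothesis} here (not something to be derived, as your last sentence suggests), so $\delta(XY)$ already exists in $L^2(P)$ and satisfies $\mathbb{E}\bigl[F\,\delta(XY)\bigr]=\mathbb{E}\bigl[\int_{\mathcal{T}}XY(t)\,D_tF\,\mu(dt)\bigr]$. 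Combining this with your identity for smooth bounded cylindrical $F$ yields $\mathbb{E}\bigl[F\bigl(\delta(XY)-X\int_{\mathcal{T}}Y(t)\,\delta W(t)+\int_{\mathcal{T}}Y(t)D_tX\,\mu(dt)\bigr)\bigr]=0$ for all such $F$; since both random variables in question are in $L^1(\Omega)$ and bounded smooth cylindrical functionals generate $\mathcal{F}$ and hence separate $L^1$ variables, the two sides agree almost surely. Replacing your limiting argument by this separation argument closes the proof.
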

This result is proved and discussed in \citet[page 40]{N}. 

\subsection{The pure-jump L\'evy process case}
\label{subsect-levymall}

Our exposition of the Malliavin Calculus for L\'evy processes is based on the 
monograph~\cite{DNOP}. Here, the space
$(\mathcal{T},\mathcal{B},\mu)$ is 
explicitly chosen to be $\mathcal{T}=[0,T]$ with $\mu$ being the
Lebesgue measure on the 
Borel subsets $\mathcal{B}$ of $[0,T]$. We work under precisely the same assumption in the following review.

Suppose that $\Lp$ is a pure-jump L\'evy process which is square
integrable and with 
mean zero. Let 
$N(dz,dt)$ denote  the Poisson random measure associated to $\Lp$ on
$[0,T]\times\mathbb{R}_0$, with 
$\mathbb{R}_0:=\mathbb{R}\backslash\{0\}$. Moreover, we denote by
$\ell(dz)$ the  
L\'evy measure of $\Lp$. 
 In terms of its L\'evy-It\^o representation, we have
\begin{equation}\label{lipois}
\Lp(t)=\int_0^t\int_{\mathbb{R}_0}z\widetilde{N}(dz,ds)\,,
\end{equation}
where $\widetilde{N}(dt,dz)$ is the compensated
Poisson random measure.  
For $0\leq
t\leq T$, we denote by  
$\mathcal{F}_t$ the $\sigma$-algebra generated by $\Lp(s)$ for $s\leq
t$, augmented by the  sets  
of $P$-zero probability.  Assume that $\mathcal{F}=\mathcal{F}_T$.

Consider now the space $L^2(([0,T]\times\mathbb{R}_0)^n,
(dt\times\ell(dz))^n)$, that is, measurable  
real-valued functions $f$ on $([0,T]\times\mathbb{R}_0)^n$ such that
$$
|f|_{n,\ell}^2:=\int_{([0,T]\times\R_0)^n}f^2(t_1,z_1,t_2,z_2,\ldots,t_n,z_n)\,dt_1\, 
\ell(dz_1)\,dt_2\,\ell(dz_2)\cdots\,dt_n\,\ell(dz_n)<\infty\,.
$$ 
Furthermore, we denote by $\widetilde{f}$ the symmetrization of $f$ in
the pairs of 
$n$ variables $(t_1,z_1),\ldots,(t_n,z_n)$. Any function which is
symmetric in these pairs 
is called a symmetric function from now on.

The $n$-fold iterated stochastic integral of a symmetric function
$f\in L^2(([0,T]\times\mathbb{R}_0)^n)$ 
with respect to the compensated Poisson random measure is now defined to be
\begin{equation}
I_n(f):=n!\int_0^T\int_{\mathbb{R}_0}\cdots\int_0^{t_2-}\int_{\mathbb{R}_0}f(t_1,z_1,\ldots,t_n,z_n) 
\widetilde{N}(dz_1,dt_1)\cdots\widetilde{N}(dz_n,dt_n)\,.
\end{equation}
We have the Wiener-It\^o chaos expansion for all square integrable
random variables: 
\begin{theorem}
If $X\in L^2(P)$ is $\mathcal{F}_T$-measurable, then there exists a
unique sequence 
of symmetric functions $\{f_n\}_{n=0}^{\infty}$ where $f_n\in
L^2(([0,T]\times\mathbb{R}_0)^n)$ 
such that 
$$
X=\sum_{n=0}^{\infty}I_n(f_n)\,.
$$
Moreover,
$$
\|X\|^2_2=\sum_{n=0}^{\infty}n!|f_n|^2_{n,\ell}\,.
$$
\end{theorem}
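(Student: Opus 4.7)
The plan is to establish two separate facts: (i) an orthogonality/isometry relation among the iterated integrals $I_n(f)$, and (ii) density of the linear span of $\{I_n(f_n) : n \geq 0,\ f_n \text{ symmetric}\}$ in $L^2(\Omega,\mathcal{F}_T,P)$. Together these yield existence, uniqueness and the norm identity in one stroke, in close analogy with the Wiener case recalled in Subsection~\ref{subsect-wienermall}.

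First I would verify the isometry
\[
\mathbb{E}[I_n(f) I_m(g)] = \delta_{nm}\, n!\, \langle \widetilde{f},\widetilde{g}\rangle_{n,\ell},
\]
for symmetric square-integrable $f,g$. Because the $n$-fold integrals are defined over the simplex $\{0\leq t_1<\cdots<t_n\leq T\}$ with the factor $n!$ absorbing the symmetrization, one can reduce to indicator functions of products of pairwise disjoint rectangles $A_{i_j}\times B_{i_j}\subset [0,T]\times \mathbb{R}_0$ with $\int_{B}z^2\,\ell(dz)<\infty$; on such elementary functions the claim follows from the independence of $\widetilde{N}$ on disjoint sets and the isometry $\mathbb{E}[\widetilde{N}(A\times B)^2]=|A|\ell(B)$ by an induction on $n$ using iterated conditioning. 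Standard approximation of $L^2(([0,T]\times \mathbb{R}_0)^n)$ functions by such elementary ones extends the identity to the full space and simultaneously shows that $I_n$ is a well-defined continuous linear operator with $\mathbb{E}[I_n(f)^2]= n!|\widetilde{f}|_{n,\ell}^2$.

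For the density part, the key step is to exhibit a total set in $L^2(\Omega,\mathcal{F}_T,P)$ that lies in the closure of finite chaos sums. The natural candidate is the family of Dol\'eans--Dade stochastic exponentials
\[
\mathcal{E}(h) = \exp\!\Big(\int_0^T\!\!\int_{\mathbb{R}_0} h(t,z)\,\widetilde{N}(dz,dt) - \int_0^T\!\!\int_{\mathbb{R}_0}(e^{h(t,z)}-1-h(t,z))\,\ell(dz)\,dt\Big),
\]
for bounded measurable $h$ with support in $[0,T]\times K$, $K\subset \mathbb{R}_0$ bounded away from $0$. Applying It\^o's formula to $\mathcal{E}(h)$ and iterating gives the chaos expansion $\mathcal{E}(h)=\sum_{n\geq 0} \tfrac{1}{n!} I_n(h^{\otimes n})$, with convergence in $L^2(P)$ by the isometry from step one. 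Totality of $\{\mathcal{E}(h)\}$ in $L^2(\Omega,\mathcal{F}_T,P)$ is a standard fact (proved, e.g., by a monotone-class/Laplace-functional argument since the exponents identify the law of $L^{(p)}$ and generate $\mathcal{F}_T$). Combining totality with the chaos representation of each $\mathcal{E}(h)$ shows that the linear span of iterated integrals is dense, which gives existence of the expansion for arbitrary $X\in L^2(P)$. Uniqueness and the Parseval identity $\|X\|_2^2=\sum n!|f_n|_{n,\ell}^2$ are then immediate from the orthogonality established in the first step.

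The hard part is the density argument, specifically the totality of $\{\mathcal{E}(h)\}$ and the $L^2$-convergence of its chaos expansion; the orthogonality part is combinatorial and, once the simplex representation of $I_n$ is set up carefully, essentially mechanical.
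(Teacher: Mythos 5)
The paper itself offers no proof of this theorem: it is quoted as background from the monograph of Di Nunno, \O ksendal and Proske (the result goes back to It\^o's 1956 decomposition for Poisson functionals), so there is no in-paper argument to compare against. Judged on its own, your two-step strategy --- orthogonality/isometry of the iterated integrals $I_n$, then totality of stochastic exponentials in $L^2(\Omega,\mathcal{F}_T,P)$ --- is exactly the standard proof, and the isometry step is set up correctly: reduction to indicators of disjoint rectangles, $\E[\widetilde N(A\times B)^2]=\mathrm{Leb}(A)\,\ell(B)$, induction via iterated conditioning, then $L^2$-approximation.

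There is, however, one concrete error in the density step. The exponential you wrote, $\mathcal{E}(h)=\exp\bigl(\int_0^T\int_{\mathbb{R}_0}h\,d\widetilde N-\int_0^T\int_{\mathbb{R}_0}(e^{h}-1-h)\,\ell(dz)\,dt\bigr)$, is the Dol\'eans--Dade exponential of the martingale $\int_0^{\cdot}\int_{\mathbb{R}_0}(e^{h(s,z)}-1)\,\widetilde N(dz,ds)$, i.e.\ it solves $\mathcal{E}_t=1+\int_0^t\int_{\mathbb{R}_0}\mathcal{E}_{s-}\,(e^{h(s,z)}-1)\,\widetilde N(dz,ds)$. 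Iterating this identity therefore produces the chaos kernels $(e^{h}-1)^{\otimes n}/n!$, \emph{not} $h^{\otimes n}/n!$ as you claim; the expansion $\mathcal{E}(h)=\sum_n \frac{1}{n!}I_n(h^{\otimes n})$ is false for this $\mathcal{E}(h)$ (it would hold for $\exp\bigl(\int\int\log(1+h)\,dN-\int\int h\,\ell(dz)\,dt\bigr)$). The slip is repairable without changing the architecture: substitute $\varphi=e^{h}-1$ and note that every bounded $\varphi>-1$ supported in $[0,T]\times K$, with $K$ bounded away from the origin, arises this way, so the family of exponentials is unchanged and totality still yields density of the span of the $I_n$. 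Two smaller points you should tighten: first, the condition you need on $B$ in the elementary isometry is $\ell(B)<\infty$ (which is what ``bounded away from $0$'' buys you); your stated condition $\int_B z^2\,\ell(dz)<\infty$ is automatic for any L\'evy measure and is not the relevant one. Second, the theorem concerns $\mathcal{F}_T=\sigma(\Lp(s):s\le T)$, while your totality argument naturally gives density in $L^2$ of the $\sigma$-field generated by $N$; you must record that these coincide up to null sets --- $N$ is the jump measure of $\Lp$, and $\Lp$ is recovered from $N$ via \eqref{lipois} --- to close the loop.
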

As in the Wiener case, we introduce a Malliavin derivative as an annihiliation 
operator on chaos. Since the kernel functions $f_n$ now are functions of 
pairs $(t,z)$, it is natural to introduce a derivative operator
$D_{t,z}$ parametrized over 
the pair $(t,z)$ rather than $t$ as was the case for Brownian
motion. We say that 
$X\in\mathbb{D}^{1,2}$, the domain of the Malliavin derivative
$D_{t,z}$, whenever 
$$
\sum_{n=1}^{\infty}nn!|f_n|_{n,\ell}^2<\infty\,.
$$ 
In this case, we define
\begin{equation}
D_{t,z}X=\sum_{n=1}^{\infty}nI_{n-1}(f_n(\cdot,t,z))\,.
\end{equation}
Again, as in the Wiener case, we can introduce a Skorohod integral as
the adjoint of the  
Malliavin derivative. We say that the space-time random field
$Y\in L^2([0,T]\times\mathbb{R}_0\times\Omega)$ is Skorohod integrable
(that is, belongs to 
$\text{Dom}(\delta)$) if
$$
\vert\mathbb{E}\left[\int_0^T\int_{\mathbb{R}_0}Y(t,z)D_{t,z}X\,\ell(dz)\,dt\right]\vert 
\leq c\|X\|_2\,,
$$ 
for all $X\in\mathbb{D}^{1,2}$, and $c$ being a constant only
depending on $Y$. If  
$Y\in\text{Dom}(\delta)$, then the Skorohod integral of $Y$, denoted by 
\begin{equation}
\int_0^T\int_{\mathbb{R}_0}Y(t,z)\widetilde{N}(\delta z,\delta t)
\end{equation}
is defined as the unique element in $L^2(P)$ characterised by
\begin{equation}
\mathbb{E}\left[X\int_0^T\int_{\mathbb{R}_0}Y(t,z)\widetilde{N}(\delta
  z,\delta t)\right] 
=\mathbb{E}\left[\int_0^T\int_{\mathbb{R}_0}Y(t,z)D_{t,z}X\,\ell(dz)\,dt\right]\,, 
\end{equation} 
for all $X\in\mathbb{D}^{1,2}$. 

The Skorohod integral can be viewed as a creation
operator on the chaos: it can be shown that $Y\in\text{Dom}(\delta)$
if and only if 
\begin{equation}
\sum_{n=0}^{\infty}(n+1)!|\widetilde{f}_n|_{n+1,\ell}^2<\infty\,,
\end{equation}
where $\widetilde{f}_n\in L^2(([0,T]\times\mathbb{R}_0)^{n+1})$ is the
symmetrization 
of the $n$th chaos kernel function $f_n(\cdot,t,z)$ of $Y$. The
Skorohod integral becomes 
\begin{equation}
\int_0^T\int_{\mathbb{R}_0}Y(t,z)\widetilde{N}(\delta z,\delta t)=
\sum_{n=0}^{\infty}I_{n+1}(\widetilde{f}_n)\,.
\end{equation} 
We see that the definitions of the Malliavin derivative and Skorohod
integration 
is completely analogous in the Wiener and pure-jump L\'evy cases. 

We now move on to some properties of the Skorohod integral, which
turn  out to be 
slightly different from  the Wiener case. We start with the
\lq\lq fundamental theorem of calculus\rq\rq: 
\begin{proposition}
Suppose that $Y\in\text{Dom}(\delta)\cap\mathbb{D}^{1,2}$. If the random field
$(s,y)\mapsto D_{t,z}Y(s,y)$ is Skorohod integrable, for almost every
$(t,z)$, then 
\begin{equation}
D_{t,z}\int_0^T\int_{\mathbb{R}_0}Y(s,y)\widetilde{N}(\delta y,\delta
s)=Y(t,z)+ 
\int_0^T\int_{\mathbb{R}_0}D_{t,z}Y(s,y)\widetilde{N}(\delta y,\delta s)\,.
\end{equation}
\end{proposition}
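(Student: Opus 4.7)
The plan is to prove the identity by expanding everything in terms of the Wiener-It\^o chaos with respect to the compensated Poisson random measure, reducing the statement to a combinatorial identity about symmetrizations of kernel functions. This is the same strategy that works for the corresponding Wiener proposition stated earlier in the excerpt.

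First, I would write $Y(s,y)=\sum_{n=0}^{\infty}I_{n}(f_{n}(\cdot,s,y))$, where each $f_{n}$ is a symmetric function in its first $n$ pairs of variables taking values in $L^{2}(([0,T]\times\mathbb{R}_{0})^{n})$. By the chaos representation of the Skorohod integral recalled just before the proposition, I have
\begin{equation*}
\int_{0}^{T}\!\!\int_{\mathbb{R}_{0}}Y(s,y)\widetilde{N}(\delta y,\delta s)=\sum_{n=0}^{\infty}I_{n+1}(\widetilde{f}_{n}),
\end{equation*}
where $\widetilde{f}_{n}$ is the full symmetrization of $f_{n}$ in all $n+1$ pairs. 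Applying $D_{t,z}$ term by term yields the left-hand side:
\begin{equation*}
D_{t,z}\int_{0}^{T}\!\!\int_{\mathbb{R}_{0}}Y(s,y)\widetilde{N}(\delta y,\delta s)=\sum_{n=0}^{\infty}(n+1)I_{n}\bigl(\widetilde{f}_{n}(\cdot,t,z)\bigr).
\end{equation*}
For the right-hand side, the chaos expansion of $Y(t,z)$ gives $\sum_{n}I_{n}(f_{n}(\cdot,t,z))$; writing $D_{t,z}Y(s,y)=\sum_{n\ge1}nI_{n-1}(f_{n}(\cdot,t,z,s,y))$ and taking its Skorohod integral in $(s,y)$ gives $\sum_{n\ge 1}I_{n}(n\widetilde{h}_{n})$, where $h_{n}(w_{1},\ldots,w_{n-1},w_{n}):=f_{n}(w_{1},\ldots,w_{n-1},t,z,w_{n})$ and $\widetilde{h}_{n}$ is its symmetrization in the $n$ pairs $w_{1},\ldots,w_{n}$.

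The proof then reduces to a kernel-level identity: since $f_{n}$ is symmetric in its first $n$ pairs, the standard formula for $\widetilde{f}_{n}$ gives
\begin{equation*}
(n+1)\widetilde{f}_{n}(w_{1},\ldots,w_{n},t,z)=f_{n}(w_{1},\ldots,w_{n},t,z)+\sum_{i=1}^{n}f_{n}(w_{1},\ldots,\hat{w}_{i},\ldots,w_{n},t,z,w_{i}),
\end{equation*}
while the definition of $\widetilde{h}_{n}$ gives exactly $n\widetilde{h}_{n}(w_{1},\ldots,w_{n})=\sum_{i=1}^{n}f_{n}(w_{1},\ldots,\hat{w}_{i},\ldots,w_{n},t,z,w_{i})$. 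Matching coefficients in each chaos order yields the asserted identity.

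The routine part is the combinatorics of the symmetrizations. The main obstacle is justifying the manipulations analytically: one must check that under the assumptions $Y\in\mathrm{Dom}(\delta)\cap\mathbb{D}^{1,2}$ together with Skorohod integrability of $(s,y)\mapsto D_{t,z}Y(s,y)$ for a.e. $(t,z)$, the sums above converge in $L^{2}(P)$, $D_{t,z}$ commutes with the series defining the Skorohod integral, and the iterated use of Fubini between $\ell(dz)\,dt$ and the chaos expansion is valid. This is done exactly as in the Wiener analogue in~\cite{N}: one expresses the hypotheses through the norm estimates $\sum_{n}(n+1)!|\widetilde{f}_{n}|_{n+1,\ell}^{2}<\infty$ and $\sum_{n}n\,n!|\widetilde{f}_{n}|_{n+1,\ell}^{2}<\infty$, which allows one to differentiate and integrate term-by-term and to identify the two sides as elements of $L^{2}(P)$.
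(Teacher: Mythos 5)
Your proof is correct and matches the canonical argument: the paper itself does not prove this proposition (it is quoted as background from the monograph \cite{DNOP}), and your chaos-expansion proof—reducing both sides, order by order, to the kernel identity $(n+1)\widetilde{f}_n(w_1,\ldots,w_n,t,z)=f_n(w_1,\ldots,w_n,t,z)+\sum_{i=1}^n f_n(w_1,\ldots,\hat{w}_i,\ldots,w_n,t,z,w_i)$—is exactly the standard proof in that reference, mirroring the Wiener case in \cite{N}. The only blemish is in your closing sketch of the analytic justification: the summability condition ensuring the left-hand side is well defined (i.e.\ that the Skorohod integral lies in $\mathbb{D}^{1,2}$) should read $\sum_n (n+1)\,(n+1)!\,|\widetilde{f}_n|_{n+1,\ell}^2<\infty$ rather than $\sum_n n\,n!\,|\widetilde{f}_n|_{n+1,\ell}^2<\infty$, a harmless slip that does not affect the kernel-level combinatorics.
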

The next result, which is an integration by parts formula,  is particularly useful in our stochastic integral definition
for \V\  processes.
\begin{proposition}[Integration by parts formula]\label{prop4}
Suppose that $Y\in\text{Dom}(\delta)$ and $X\in\mathbb{D}^{1,2}$. If the random
field $Y(t,z)(X+D_{t,z}X)$ is Skorohod integrable, then
\begin{align}
X\int_0^T\int_{\mathbb{R}_0}Y(t,z)\widetilde{N}(\delta z,\delta
t)&=\int_0^T\int_{\mathbb{R}_0} 
Y(t,z)(X+D_{t,z}X)\widetilde{N}(\delta z,\delta t) \nonumber \\
&\qquad+\int_0^T\int_{\mathbb{R}_0}Y(t,z)D_{t,z}X\,\ell(dz)\,dt\,.
\end{align}
\end{proposition}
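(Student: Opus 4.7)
The plan is to exploit the duality between the Skorohod integral and the Malliavin derivative, testing the proposed identity against a dense family of smooth random variables $F\in\mathbb{D}^{1,2}$. The essential ingredient, which is specific to the pure-jump setting and which explains the difference between Proposition \ref{prop4} and the Wiener analogue in Proposition \ref{prop2}, is the Leibniz-type rule
\begin{equation*}
D_{t,z}(XF)=F\,D_{t,z}X+(X+D_{t,z}X)\,D_{t,z}F.
\end{equation*}
One obtains this identity either from the chaos expansion or, more transparently, from the add-a-jump representation $D_{t,z}G(\omega)=G(\omega_{t,z})-G(\omega)$, by expanding $(XF)(\omega_{t,z})-(XF)(\omega)$ and collecting terms. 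Note the cross-term $D_{t,z}X\cdot D_{t,z}F$, absent in the Gaussian Leibniz rule, which is exactly what produces the twist $Y(X+D_{t,z}X)$ in the integrand on the right-hand side.

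With this formula in hand, I would first apply the duality for the Skorohod integral with respect to $\widetilde{N}$ to the random variable $XF$, namely
\begin{equation*}
\mathbb{E}\!\left[XF\int_0^T\!\!\int_{\mathbb{R}_0}Y(t,z)\widetilde{N}(\delta z,\delta t)\right]=\mathbb{E}\!\left[\int_0^T\!\!\int_{\mathbb{R}_0}Y(t,z)\,D_{t,z}(XF)\,\ell(dz)\,dt\right],
\end{equation*}
and then plug in the Leibniz rule for $D_{t,z}(XF)$. This splits the right-hand side into two pieces. The piece involving $F\,D_{t,z}X$ has $F$ pulled out of the $(t,z)$-integral and yields $\mathbb{E}[F\int Y\,D_{t,z}X\,\ell(dz)\,dt]$. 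The piece involving $(X+D_{t,z}X)\,D_{t,z}F$ is again of the dual form, and since by hypothesis $Y(X+D_{t,z}X)\in\mathrm{Dom}(\delta)$, the Skorohod duality applied in reverse rewrites it as $\mathbb{E}[F\int Y(X+D_{t,z}X)\widetilde{N}(\delta z,\delta t)]$.

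Combining these yields
\begin{equation*}
\mathbb{E}\!\left[F\,X\int Y\,\widetilde{N}(\delta z,\delta t)\right]=\mathbb{E}\!\left[F\!\left(\int Y(X+D_{t,z}X)\widetilde{N}(\delta z,\delta t)+\int Y\,D_{t,z}X\,\ell(dz)\,dt\right)\right]
\end{equation*}
for every $F$ in a class of test variables dense in $L^2(P)$ (for instance, smooth functionals of finitely many values of $\widetilde N$, for which $XF\in\mathbb{D}^{1,2}$ can be checked directly). Density then forces the identity claimed in the proposition as elements of $L^2(P)$, and therefore $P$-almost surely.

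The main technical obstacle is not the algebraic manipulation but justifying the two applications of duality. In particular, one must ensure that the class of test variables $F$ used is rich enough for $XF\in\mathbb{D}^{1,2}$, while simultaneously satisfying the integrability bound defining $\mathrm{Dom}(\delta)$ when pairing with $Y(X+D_{t,z}X)$; this is where the standing hypothesis that $Y(t,z)(X+D_{t,z}X)$ is Skorohod integrable is used in an essential way. Modulo these standard approximation arguments, the proof reduces to the one-line verification of the jump-case product rule for $D_{t,z}$.
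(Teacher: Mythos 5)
Your argument is correct and is essentially the canonical proof: the paper itself states Proposition~\ref{prop4} \emph{without} proof, as part of its review of the calculus in \citet{DNOP}, and the proof in that source is precisely your duality argument --- pair $X\int_0^T\int_{\mathbb{R}_0}Y\,\widetilde{N}(\delta z,\delta t)$ against smooth test functionals $F$, apply the jump-case product rule $D_{t,z}(XF)=F\,D_{t,z}X+(X+D_{t,z}X)\,D_{t,z}F$ (which indeed follows from the add-a-jump representation of $D_{t,z}$, its cross term being exactly what distinguishes this from the Gaussian Proposition~\ref{prop2}), and undo the duality on the $(X+D_{t,z}X)D_{t,z}F$ piece using the standing hypothesis that $Y(t,z)(X+D_{t,z}X)\in\mathrm{Dom}(\delta)$. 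One small refinement: the term $\int_0^T\int_{\mathbb{R}_0}Y(t,z)D_{t,z}X\,\ell(dz)\,dt$ and the product $X\int Y\,\widetilde{N}(\delta z,\delta t)$ are in general only in $L^1(P)$ rather than $L^2(P)$, so the final identification should be made by testing against a multiplicative, $\sigma$-field-generating class of bounded functionals together with a monotone class argument, rather than by $L^2$-density --- but this is exactly the kind of routine tightening you already flag.
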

We observe that this relationship is not completely analogous to the
Gaussian case, as we  
have the additional  term $Y(t,z)D_{t,z}X$ inside the Skorohod integral.

\section{Volatility modulated \levy--driven Volterra processes}\label{SectVMLV}
Let us now define the class of \emph{volatility modulated \levy--driven Volterra} (\V) processes.
As before, we denote by $L=(L_t)_{t\geq 0}$   a   \Levy process. 
A \V  process $X=(X(t))_{t \geq 0}$, is defined  by
 \begin{equation}
\label{def-vmlv}
X(t)=\int_{0}^tg(t,s)\sigma(s)\,dL(s), \qquad t \geq 0,
\end{equation}
 where $g$ is a real--valued measurable function defined on
the space $\mathbb{R}^2_>:=\{(t,s)\in\mathbb{R}^2\,\vert\,t> s\geq 0\}$ and 
$\sigma= (\sigma(t))_{t \geq 0}$ is an $\mathfrak{F}$--adapted, predictable stochastic process.  
Note that generally $g(t,s)$ is not defined for $t=s$. 

We always assume that the integrand $s\mapsto g(t,s)\sigma(s)$ satisfies the integrability conditions
(\ref{J234}) $a.s.$ for all $t \in \R$, making   $X$   well-defined and $\mathfrak{F}$--adapted. 

Note that in financial modelling, $\sigma$ is usually referred to as the {\it stochastic volatility}, while in applications to turbulence, it is referred to as the {\it stochastic intermittency}. In some  applications, the stochastic processes  $\sigma$ and $L$ are assumed to be independent.

\subsection{Examples of the kernel function}
Let us study some relevant examples of \V processes, which are often used in applications.

An important type of  kernel function is the so--called
\emph{shift-kernel}, which is defined as  $g(t,s) = g(t-s)$ for all $t > s\geq 0$. 
This choice is motivated from the class 
of  \emph{\Levy semistationary} (\lss) processes. 
In the Brownian case such processes have been used for modelling turbulence, see \cite{BNSch09}, and the extension to more general \Levy processes has been introduced in \cite{BNBV2010a} in the context of modelling energy spot prices.

\begin{ex}
The \LSS processes encompass many existing dynamical models. For example, by choosing $g(t,s)=\exp(-\alpha(t-s))$ for $s \leq t$ with $\alpha > 0$ and  
$\sigma(s)=1$, we recover the 
solution of the Ornstein--Uhlenbeck (OU) process
$$
dX(t)=-\alpha X(t)\,dt+dL(t)\,.
$$
\end{ex}
\begin{ex}
A generalization of OU--processes and still within the class of $\mathcal{LSS}$ processes, is the class of CARMA($p,q$)--processes.
A CARMA process is the continuous--time analogue of an ARMA time
series, see \cite{Brockwell2001a, Brockwell2001b} for details, and it
is defined as follows. Suppose that for $p, q \in \mathbb{N}_0$ with $p>q$,   
$
X(t)=\mathbf{b}'\mathbf{V}(t)$,
where $\mathbf{b}\in\R^p$ and $\mathbf{V}(t)$ is a $p$-dimensional OU process of the form
\begin{equation}
\label{multi-OU-carma}
d\mathbf{V}(t)={\bf A}\mathbf{V}(t) dt+\mathbf{e}_pd L(t),
\end{equation}
with 
$$
{\bf A}=\left[\begin{array}{cc} \mathrm{0} & {\bf I}_{p-1} \\ -\alpha_p & -\alpha_{p-1}\cdots-\alpha_1\end{array}\right]\,. 
$$
Here we use the notation ${\bf I}_{p-1}$ for the $(p-1)\times (p-1)$--identity matrix, $\mathbf{e}_p$ is the $p$th coordinate vector (where the first $p-1$ entries are zero and the $p$th entry is 1) and
$\mathbf{b}'=[b_0, b_{1},\ldots,b_{p-1}]$ is the transpose of $\mathbf{b}$, with $b_q=1$ and $b_j=0$ for $q<j<p$. 
\cite{Brock} shows  that if all the eigenvalues of ${\bf A}$ have negative real parts, then
$\mathbf{V}(t)$ defined as
$$
\mathbf{V}(t)=\int_{-\infty}^t e^{{\bf A}(t-s)}\mathbf{e}_p\,dL(s)\,,
$$
is the (strictly) stationary solution of \eqref{multi-OU-carma}. Furthermore,  
\begin{align}\label{CARMA}
X(t)= \mathbf{b}'{\bf V}(t) = \int_{-\infty}^t\mathbf{b}'e^{{\bf A}(t-s)}\mathbf{e}_p\,dL(s)\,,
\end{align}
is a CARMA($p$, $q$) process.  
Hence, a CARMA process is essentially an \LSS process with constant volatility and kernel function specified as
$g(t,s)=\mathbf{b}'\exp({\bf A}(t-s))\mathbf{e}_p$. 
\end{ex}

\begin{ex}\label{TurbulenceEx}
In applications to turbulence, one often works with the following  function: 
\begin{equation}\label{toyeq}
g(t,s)=g(t-s)= (t-s)^{\nu-1}\exp(-\lambda(t-s))\,,
\end{equation}
with $\nu>\frac{1}{2}$ and $\lambda>0$, see \cite{BNSchmiegel2008}. This choice leads to a well-defined \LSS process, which is stationary (provided the stochastic volatility component is stationary). However, notice that $g(\cdot)$ has a singularity   at zero when $\nu \in (\frac{1}{2},1)$.   
\end{ex}

\begin{ex} Important examples within our modelling framework are also fractional Brownian motion and  fractional \Levy processes. To see this, suppose that 
  $\sigma\equiv 1$. Choosing
\begin{equation}\label{fbm1}
g(t,s)=c(H)(t-s)^{H-1/2}+c(H)\left(\frac12-H\right)\int_s^t(u-s)^{H-3/2}\left(1-(s/u)^{1/2-H}\right)\,du\,,
\end{equation}
with
$$
c(H)=\sqrt{\frac{2H\Gamma(\frac32-H)}{\Gamma(H+\frac12)\Gamma(2-2H)}}\,,
$$
and $H\in(0,1)$, we recover fractional Brownian motion (see 
\cite{AMN}, page 798):
\begin{equation*}
X(t)= \int_0^t g(t,s)\, dB(s),\quad t\geq0,
\end{equation*}
where $B$ is a standard Brownian motion. 
Recall the following alternative representation of fractional Brownian motion (cf.\ e.g.\ \cite{Nualart06}):
\begin{equation}\label{fbmrep}
X(t)= \int_{-\infty}^\infty g(t,s)\, dB(s),  
\end{equation}
where 
\begin{equation}\label{fbm2}
g(t,s)= \frac{1}{C_1(H)}\left( (t-s)^{H-1/2}_+ - (-s)_+^{H-1/2}\right), \quad x_+:= \max\{x,0\}, 
\end{equation}
and
\begin{equation*}
C_1(H)=\left( \int_0^\infty \left((1+s)^{H-1/2}- s^{H-1/2}\right)^2\, ds +\frac{1}{2H}\right)^{1/2}\, .
\end{equation*}
Replacing $B$ in \eqref{fbmrep} by a zero mean square-integrable two-sided pure-jump L\'evy process $ L(t)$ and assuming $H\in (1/2,1)$ we get a so-called fractional L\'evy process as defined by \citet{Marquardt}.
\end{ex}
\subsection{Examples of the stochastic volatility/intermittency process}
So far, we only mentioned that we allow for stochastic volatility/intermittency. Let us briefly point out  relevant specifications of such processes. 
We often model $\sigma^2$ as a $\mathcal{VMLV}$ process itself, i.e.~let 
\begin{align}\label{Modelsigma}
\sigma^2_t = \int_{0}^t i(t,s) d L_{\sigma}(s),
\end{align}
 where $i$ is a real-valued measurable function defined on $\mathbb{R}_{>}^2$ which is integrable with respect to the  L\'{e}vy subordinator  $L_{\sigma}$. 
  Moreover, if one wants to ensure that the volatility process is stationary, one can work with the specification 
\begin{align}\label{Modelsigma2}
\sigma^2_t = \int_{-\infty}^t i(t-s) d L_{\sigma}(s),
\end{align}
where  $L_{\sigma}$ is a two--sided \Levy process and the integration is defined as in \cite{RajputRosinski}. 
A simple example which involves dependence between $\sigma$ and $L$ is the following one. 
\begin{ex}
Let $L$ denote a two--sided L\'{e}vy process with inverse Gaussian (IG) marginal distribution. Further choose
$L_{\sigma}= L$ and $i(t,s) = \exp(-\beta(t-s))$ for $\beta > 0$. Then $\sigma^2$ is an Ornstein-Uhlenbeck process driven by an inverse Gaussian L\'{e}vy process, i.e.~an IG-OU process.  
\end{ex}


\subsection{Semimartingale conditions}
 It is important to note  that generally \V processes are not semimartingales. 
E.g.~consider the case of Example \ref{TurbulenceEx}, which is relevant in the context of modelling turbulence; there the kernel function was chosen as 
$g(t,s)= (t-s)^{\nu-1}\exp(-\lambda(t-s))$, 
with $\nu>\frac{1}{2}$ and $\lambda>0$. When $\nu \in (\frac{1}{2}, 1)$ or $\nu \in(1, \frac{3}{2}]$, the corresponding \V process is not a semimartingale, and these cases are of primary interest in the turbulence context, see \cite{BNSch09}.

However, 
as long as $g$ is non-singular and satisfies a smoothness condition, we can show that $X$ is a semimartingale. Indeed, we  have the following result, which is closely related to e.g.\ Corollary~4.9 in \citet{Protter85} and Theorem 4.6 in~\citet{Andreas}: 
\begin{proposition}\label{semima}
Suppose $X$ is a $\mathcal{VMLV}$ process defined as in \eqref{def-vmlv}, and
assume that  $t\mapsto \sigma(t)$ is locally bounded pathwise $a.s.$ Suppose $g(t,s)$ is defined for all $0\leq s \leq t$
and  that there is a bi-measurable  function $\phi(t,s)$ such that 
\begin{equation}\label{integ1}
g(t,s)= g(s,s) +\int_s^t\phi(v,s)\,dv \quad \mbox{for all }0\leq s\leq t,
\end{equation}
where $\int_0^t (g(s,s))^2<\infty$ for all $t>0$ and 
\begin{equation}\label{integ2}
\int_0^t\int_0^u(\phi(u,s))^2\,ds\, du<\infty, \quad \mbox{for all } t>0. 
\end{equation} 
Then, $(X(t))_{t \geq 0}$ is a semimartingale with decomposition
\begin{equation}
dX(t)=g(t,t)\sigma(t)\,d L(t)+\int_{0}^t\phi(t,s)\sigma(s)\,dL(s)\,dt\,, 
\end{equation}
that is
\begin{equation}\label{decx}
X(t)= \int_0^tg(s,s)\sigma(s)\, d L(s) +\int_0^t \int_0^v \phi(v,s)\sigma(s)\,d L(s)\,dv,\, \qquad t \geq 0. 
\end{equation}
\end{proposition}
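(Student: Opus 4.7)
The plan is to decompose $X(t)$ using the representation \eqref{integ1}, and then apply a stochastic Fubini theorem to swap orders of integration. Substituting $g(t,s)=g(s,s)+\int_s^t\phi(v,s)\,dv$ into the definition of $X$ gives
\begin{equation*}
X(t)=\int_0^t g(s,s)\sigma(s)\,dL(s)+\int_0^t\left(\int_s^t\phi(v,s)\,dv\right)\sigma(s)\,dL(s),
\end{equation*}
and rewriting the inner time integral as $\int_0^t\mathbb{I}_{\{s<v\le t\}}\phi(v,s)\,dv$ exposes the exchange
\begin{equation*}
\int_0^t\int_0^t\mathbb{I}_{\{s<v\le t\}}\phi(v,s)\sigma(s)\,dv\,dL(s)=\int_0^t\int_0^v\phi(v,s)\sigma(s)\,dL(s)\,dv,
\end{equation*}
which is exactly the drift term claimed in \eqref{decx}.

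Next, I would justify each summand is well-defined and a semimartingale. For the first summand, the integrand $s\mapsto g(s,s)\sigma(s)$ is predictable; local boundedness of $\sigma$ together with $\int_0^t g(s,s)^2\,ds<\infty$ is enough to verify the integrability conditions \eqref{J234} pathwise (localising $\sigma$ and splitting $L$ by its \levy--It\^o decomposition \eqref{LevyIto} to handle the Brownian part, the compensated small-jump martingale part, and the finite-variation large-jump part separately). The resulting process is a stochastic integral against the \Levy\ semimartingale $L$, hence a semimartingale. For the second summand, once Fubini is justified, the inner integral $M(v):=\int_0^v\phi(v,s)\sigma(s)\,dL(s)$ defines a real-valued random variable for each $v$, and $t\mapsto \int_0^t M(v)\,dv$ is absolutely continuous in $t$, hence of finite variation and a fortiori a semimartingale.

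The main obstacle is the stochastic Fubini step, since $L$ has both a Gaussian and a jump part. I would split $L$ as in \eqref{LevyIto} and argue component-wise. For the Brownian component, the classical stochastic Fubini theorem applies provided $\E\int_0^t\int_0^v\phi(v,s)^2\sigma(s)^2\,ds\,dv<\infty$, which follows (after localisation of $\sigma$) from the square-integrability assumption \eqref{integ2}. For the compensated-Poisson part, a stochastic Fubini in the form of Protter or Applebaum applies under the same $L^2$ bound. For the drift term and the finite-variation large-jump part, the exchange is a pathwise Fubini, valid by measurability of $\phi$ and $\sigma$ together with pathwise local boundedness. Combining these three exchanges yields the claimed identity.

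Finally, adding the two semimartingale pieces gives the decomposition \eqref{decx}, and reading it differentially yields $dX(t)=g(t,t)\sigma(t)\,dL(t)+\bigl(\int_0^t\phi(t,s)\sigma(s)\,dL(s)\bigr)dt$, proving the proposition. The only delicate point is checking the integrability hypotheses for Fubini on the jump part of $L$; everything else reduces either to classical stochastic calculus for \Levy\ integrals or to ordinary Lebesgue Fubini after localising $\sigma$.
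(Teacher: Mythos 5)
Your proposal is correct and follows essentially the same route as the paper: substitute the representation \eqref{integ1} into the definition of $X$, interchange the order of integration via a stochastic Fubini theorem to obtain \eqref{decx}, and observe that the second term is absolutely continuous, hence of finite variation. The only difference is in how the Fubini step is justified: the paper simply cites \citet[Theorem IV.65]{P}, which applies to $L$ as a semimartingale directly, so your component-wise splitting of $L$ via the \levy--It\^o decomposition \eqref{LevyIto} is not needed, though it is a valid (if more laborious) way of supplying the details the paper leaves implicit.
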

Note that \eqref{integ1} essentially is equivalent to the assumption that $g(s,s)$ exists and $t\mapsto g(t,s)$ is differentiable for $t>s$ with $\phi(t,s)= \frac{\partial g}{\partial t}(t,s)$. 
\begin{proof}
We have 
\begin{align*}
X(t)&= \int_0^t g(t,s) \sigma(s)\, d L(s)= \int_0^tg(s,s)\sigma(s)\, dL(s) + \int_0^t\left[\int_s^t\phi(v,s)\, dv\right]\sigma(s)\, d L(s)\\
&=\int_0^tg(s,s)\sigma(s)\, d L(s)+ \int_0^t\left[\int_0^v\phi(v,s)\sigma(s)\, d L(s)\right]\,  dv,
\end{align*}
where we have applied   Fubini's theorem, see \citet[Theorem IV.65]{P}, which is justified by the assumptions on $\phi$ and $\sigma$.   
\end{proof}
See also \cite{BassePedersen2009} and 
  \cite{Basse2010} for related results.

\begin{ex}
Consider the  example from turbulence where we choose $g(t,s) = (t-s)^{\nu-1} e^{_-\lambda(t-s)}$ with $\lambda > 0$ and $\nu > \frac12$. 
Note that assuming that $\nu>\frac{3}{2}$  implies that $g(s,s)=0$.  The integrability conditions \eqref{integ1} and \eqref{integ2} are then satisfied with $\phi(t,s) =\frac{\partial g}{\partial t}(t,s)$ for $t>s$. Also note that $t\mapsto g(t,s)$ is differentiable at $t=s$  when $\nu>2$.
According to Proposition~\ref{semima}, the semimartingale cases are $\nu=1$ and $\nu >\frac{3}{2}$.   In fact, when $\nu>\frac{3}{2}$ the process is even of bounded variation. 
See \cite{BNSch09,BNCP11} for more details.   
\end{ex}

\section{Stochastic integration}\label{SectStochasticIntegration}

In this section we are concerned with stochastic integration with respect to a \V process $X$, that is, to define
\begin{equation}
\label{stochint}
\int_{0}^tY(s)\,dX(s), \qquad t \geq 0,
\end{equation}
for a class of stochastic processes $Y(s)\,,s\in\mathbb{R}$. The integrands $Y$ may be  \V processes themselves. We are concerned with having a definition which may facilitate the integration with respect to possibly singular \V processes, in the sense of kernel functions  $g(t,s)$ which may not be defined for $t=s$.

Since at  present we only have a theory of integration with respect to Brownian and pure-jump \Levy processes separately we consider  two cases: 
First, we study the case when $X$ is a 
\emph{volatility modulated Brownian Volterra} (\vvb) process,
\begin{equation}
\label{def-bss}
X(t)=\int_{0}^tg(t,s)\sigma(s)\,dB(s),  
\end{equation}
where $B$ is a standard Brownian motion. 
Next, we treat  the {\it pure-jump} \V case
\begin{equation}
\label{def-lss}
X(t)=\int_{0}^tg(t,s)\sigma(s)\,d\Lp(s),
\end{equation} 
where $\Lp$ is a zero-mean square integrable pure-jump \Levy process.

\subsection{The \VB case}
We start with the case of a volatility modulated Brownian driven Volterra process.
Throughout this Section, we will be working under the following assumption.
\begin{description}
\item[Assumption A]
The process $B=(B(t))_{t\geq 0}$   is a  standard Brownian motion. For $t\geq 0$ let $\mathcal{F}_t$ be  the $\sigma$-field generated by $B(s)$ with $s\leq t$.  The process  $(\sigma(t))_{t\geq 0}$ is  $(\mathcal{F}_t)_{t\geq 0}$-predictable and 
\begin{equation}\label{inteq}
\E\left(\int_0^t(g(t,s)\sigma(s))^2\, ds\right)<\infty,
\end{equation}
for all $t\geq 0$. Finally, $\mathcal{F}$ is the $\sigma$-field generated by $B$. 
\end{description}
When Assumption A is satisfied   $X(t)$ defined as in \eqref{def-bss} 
is a well-defined square-integrable zero mean \vvb\ process.

In order to motivate our definition of stochastic integration, we start out with a \emph{heuristic} derivation. First, supposing  $t\mapsto X(t)$ and $s\mapsto Y(s)$ are differentiable, integration by parts yields 
\begin{align*}
\int_{0}^tY(s)\,dX(s)&=\int_{0}^tY(u)\frac{dX(u)}{du}\,du
=\left\{Y(s)X(s)\right\}_{s=0}^t-\int_{0}^{t}\frac{d Y(u)}{d u}X(u)\,du\\
&=Y(t)X(t)-\int_{0}^t\frac{d Y(u)}{d u}\int_{0}^ug(u,s)\sigma(s)\,dB(s)\,du\,.
\end{align*}
In the integral on the right-hand side, we use Proposition~\ref{prop2}
to get 
\begin{align*}
\int_{0}^tY(s)\,dX(s)&=Y(t)X(t)-\int_{0}^t\int_{0}^u\frac{d Y(u)}{d u}g(u,s)\sigma(s)\delta B(s)\,du \\
&\qquad-\int_{0}^t\int_{0}^u D_s\left\{\frac{d Y(u)}{d u}\right\}g(u,s)\sigma(s)\,ds\,du\,.
\end{align*}
Here, $\delta B$ denotes stochastic integration with respect to Brownian motion in the Skorohod sense, while $D_s$ is the
Malliavin derivative. Applying a stochastic Fubini theorem, 
 it holds that 
\begin{align*}
\int_{0}^tY(s)\,dX(s)&=Y(t)X(t)-\int_{0}^t\left(\int_s^tg(u,s)\frac{d Y(u)}{d u}\,du\right)\sigma(s)\,\delta B(s) \\
&\qquad-\int_{0}^tD_s\left\{\int_s^tg(u,s)\frac{d Y(u)}{d u}\,du\right\}\sigma(s)\,ds\,.
\end{align*}
Here, we used that the Malliavin derivative is linear.
On the first term on the right--hand side we again apply Proposition~\ref{prop2}
to reach
\begin{align*}
Y(t)X(t)&=Y(t)\int_{0}^tg(t,s)\sigma(s)\,dB(s)\\
&=\int_{0}^tY(t)g(t,s)\sigma(s)\,\delta B(s)+\int_{0}^t
D_s\left\{Y(t)\right\}g(t,s)\sigma(s)\,ds\,.
\end{align*}
Thus, collecting terms we reach the expression
\begin{align*}
\int_{0}^tY(s)\,dX(s)&=\int_{0}^t\left\{Y(t)g(t,s)-\int_s^tg(u,s)\frac{d Y(u)}{du}\,du\right\}\sigma(s)\,\delta B(s) \\
&\qquad+\int_{0}^tD_s\left\{Y(t)g(t,s)-\int_s^tg(u,s)\frac{d Y(u)}{du}\,du\right\}\sigma(s)\,ds\,.
\end{align*}
Observe that after doing an integration by parts, we find
\begin{align*}
Y(t)g(t,s)&-\int_s^tg(u,s)\frac{d Y(u)}{du}\,du \\
&=Y(s)g(s,s)+\int_s^tY(u)\frac{\partial g(u,s)}{\partial u}\,du \\
&=Y(s)g(t,s)+\int_s^t\left(Y(u)-Y(s)\right)\frac{\partial g(u,s)}{\partial u}\,du\,.
\end{align*}
Here we have assumed that $g$ is differentiable with respect to its first argument. Note that in the last equality we have reorganized the integral such that the term $g(s,s)$ is not appearing, thus ensuring that we can include integration for \V processes which may be singular at $g(s,s)$. 

The above derivation motivates the introduction of the operator
\begin{equation}
\label{K_g-def}
\mathcal{K}_g(h)(t,s)=h(s)g(t,s)+\int_s^t\left(h(u)-h(s)\right)g(du,s)\,,
\end{equation}
whenever integration with respect to $g(du,s)$ makes sense as a Lebesgue-Stieltjes integral.  That is, for any $s\geq 0$, the mapping $t\mapsto g(t,s)$ is of bounded variation for $t$ in  any bounded interval $[u,v]$ with $u>s$.  We remark that precisely the same operator  $\mathcal{K}_g$ appears in 
\citet[p.~770]{AMN} in their definition of stochastic  integration for Gaussian processes. In the particular case of fractional Brownian motion  this  operator    has also been very useful, cf.\ e.g.\ \citet{BHOZ}.
Note that the definition does not require the existence of $g(s,s)$. Moreover,  the mapping   $t\mapsto g(t,s)$ can be  of unbounded variation on   $(s,v)$ for any $v>s$. 

\begin{ex}
Note that the example of key relevance in modelling turbulence is the choice of the kernel function given in 
equation  \eqref{toyeq}. The above operator can handle this case  for any $\nu>1/2$. To see that note that  $t\mapsto g(t,s)$ is of bounded variation on $[u,v]$ but  unbounded variation for $t\in (s,v)$ for  $s<u<v$ if $\nu\in (\frac{1}{2}, 1)$; further  $g(s,s)=1$  and $t\mapsto g(t,s)$ is of finite variation on any interval if $\nu=1$,  and $g(s,s)=0$  and $t\mapsto g(t,s)$ is of bounded variation on any interval if $\nu>1$. In the case of a fractional Brownian motion where $g$ is given by \eqref{fbm1} or \eqref{fbm2}, $g(s,s)$ does not exist and $t\mapsto g(t,s)$ is of unbounded variation on $(s,v)$ for any $v>0$ when $H\in (0,\frac{1}{2})$, $g(s,s)=1$ and $t\mapsto g(s,t)$ is of bounded variation on compacts when $H=\frac{1}{2}$ and $g(s,s)=0$ and  $t\mapsto g(s,t)$ is of bounded variation on compacts when $H>\frac{1}{2}$.  
\end{ex}

If $g(s,s)<\infty$, we can redefine $\mathcal{K}_g(h)$  as long as $u\mapsto h(u)$ is integrable with respect to $g(du,s)$ in the Lebesgue-Stieltjes sense as
\begin{equation}
\mathcal{K}_g(h)(t,s)=h(s)g(s,s)+\int_s^th(u)\,g(du,s)\,.
\end{equation}
There is also  the case of $g(u,s)$ being absolutely continuous with respect to Lebesgue measure, yielding 
\begin{equation}
\mathcal{K}_g(h)(t,s)=h(s)g(t,s)+\int_s^t(h(u)-h(s))\frac{\partial g}{\partial u}(u,s)\,du\,
\end{equation}
with $\partial g/\partial u$ being the Radon--Nikodym derivative of $g$ with respect to the first variable.

Based on the above heuristic arguments we are now ready to define $\int_0^tY(s)\, dX(s)$. 

\begin{defi}\label{defbm}
Let  Assumption A  be  satisfied and  let $X$ be given by \eqref{def-bss}.
  Assume that for  $s\in \R_+$ the mapping $u\mapsto g(u,s)$ is of bounded variation on  $[u,v]$ for all  $0\leq s<u<v<\infty$. Fix  $t>0$. 

   We say that a  process
$$
s\mapsto Y(s)\,,
$$ 
for $s\in[0,t]$ belongs to $\mathcal{I}^X(0,t)$ if the following conditions are satisfied:
\begin{enumerate}
\item  For     $s\in [0,t]$ the process  
$(Y(u)-Y(s))_{u\in (s,t]}$  is integrable with respect to $g(du,s)$ $a.s.$
\item The mapping 
$$
s\mapsto\mathcal{K}_g(Y)(t,s)\sigma(s)\mathbb{I}_{[0,t]}(s)\,,
$$
is Skorohod integrable  with respect to $B$.

\item $\mathcal{K}_g(Y)(t,s)$ is Malliavin differentiable with respect to $D_s$ for $s\in[0,t]$, with
$$
s\mapsto D_s\left\{\mathcal{K}_g(Y)(t,s)\right\}\sigma(s)
$$
being Lebesgue integrable on $[0,t]$.
\end{enumerate}
Suppose that $s\mapsto Y(s)$ for $s\in[0,t]$ belongs to $\mathcal{I}^X(0,t)$. Then the stochastic integral of $Y$ with respect to $X$ is defined as
\begin{align}\label{defbr}
\int_{0}^tY(s)\,dX(s)&=\int_0^t\mathcal{K}_g(Y)(t,s)\sigma(s)\,\delta B(s) +\int_{0}^tD_s\{\mathcal{K}_g(Y)(t,s)\}\sigma(s)\,ds\,. 
\end{align}
\end{defi}
The integral on the right-hand side of \eqref{defbr} is defined in the sense of Subsection~\ref{subsect-wienermall}, where as always we use the convention $\int_0^t:= \int_{\R} \mathbb{I}_{[0,t]}$.

There is no conditions of adaptedness on the stochastic process $Y(s)$. Our definition gives an anticipative stochastic integral. Note that the definition does not make use of $g(t,t)$ explicitly, and thus we can include kernel functions which are singular, such as the  choice of kernel function  often used in modelling turbulence, see \eqref{toyeq}, as  kernels in the Brownian semistationary process. 

Note here that the integral we define in \eqref{defbr} generally differs from the integral studied by \cite{AMN} since we have the additional integral with respect to the Lebesgue measure.

\subsection{The pure--jump \V case}
Now we turn our 
 attention to pure--jump \V  processes. 
In what comes, we will work under the following assumption.
\begin{description}
\item[Assumption B] 
Let $T<\infty$ denote a finite time horizon. 
The process $\Lp=(\Lp(t))_{t\in [0,T]}$ is a  square integrable pure-jump \Levy process with zero mean and, for $0\leq t\leq T$,   $\mathcal{F}_t$ is the $\sigma$-field generated by $\Lp(s)$ with $s\leq t$.  The process  $(\sigma(t))_{t\in [0,T]}$ is  $(\mathcal{F}_t)_{t\in [0,T]}$-predictable and \eqref{inteq} is satisfied for all $t\in [0,T]$. 
 Finally, $\mathcal{F}=\mathcal{F}_T$. 
\end{description}

Let us work under Assumption B and let  $(X(t))_{t\in [0,T]}$ be the pure--jump \V process defined by \eqref{def-lss}.  By decomposing $\Lp$ as in \eqref{lipois}, $X$ is given by 
$$
X(t)=\int_{0}^tg(t,s)\sigma(s)\,d\Lp(s)=\int_{0}^t\int_{\mathbb{R}}zg(t,s)\sigma(s)\,\widetilde{N}(dz,ds).
$$
 Our goal is the same as in the \VB case, to define 
$$
\int_{0}^tY(s)\,dX(s), 
$$
for a class of integrands $Y(s)$. The extension to the L\'evy case rests on the Malliavin Calculus for L\'evy processes. We refer to 
\cite{DNMBOP} for the development of such a calculus, cf.\ also Subsection~\ref{subsect-levymall}. The main ingredients of interest to us are the extension of Skorohod integration,  the Malliavin derivative and the integration by parts formula for constants. 

We start first with a heuristic derivation following the \VB case: We find
$$
\int_{0}^tY(s)\,dX(s)=Y(t)\int_{0}^tg(t,s)\sigma(s)\,d\Lp(s)-\int_{0}^t\frac{d Y(u)}{du}\int_{0}^ug(u,s)\sigma(s)\,d\Lp(s)\,du\,.
$$ 
By applying the integration by parts formula in Proposition~\ref{prop4},
we find for the first term 
\begin{align*}
Y(t)\int_{0}^t\int_{\mathbb{R}}zg(t,s)\sigma(s)\,\widetilde{N}(dz,ds) 
&=\int_{0}^t\int_{\mathbb{R}}zg(t,s)\sigma(s)\,\widetilde{N}(dz,ds) \\
&\qquad +\int_{0}^t\int_{\mathbb{R}}zg(t,s)\left(Y(t)+D_{s,z}\left\{Y(t)\right\}\right)\sigma(s)\,\widetilde{N}(\delta z,\delta s) \\
&\qquad+\int_{0}^t\int_{\mathbb{R}}zg(t,s)D_{s,z}\left\{Y(t)\right\}\sigma(s)\,\ell(dz)\,ds\,.
\end{align*}
Here, $D_{s,z}$ is the Malliavin derivative with respect to the Poisson measure $N$, and $\widetilde{N}(\delta s,dz)$ denotes the Skorohod integral with respect to the compensated Poisson random measure. Note  the additional term $D_{s,z}\left\{Y(t)\right\}$ in the Skorohod integral which is not present in the corresponding term in the \VB case. Similarly, we find for the second term
\begin{align*}
\int_{0}^t\frac{dY(u)}{du}&\int_{0}^u\int_{\mathbb{R}}zg(u,s)\sigma(s)\,\widetilde{N}(dz,ds)\,du \\
&=\int_{0}^t\int_{0}^u\int_{\mathbb{R}}zg(u,s)\left(\frac{dY(u)}{du}+D_{s,z}\left\{\frac{dY(u)}{du}\right\}\right)\sigma(s)\,
\widetilde{N}(\delta z,\delta s)\,du \\
&\qquad+\int_{0}^t\int_{0}^u\int_{\mathbb{R}}zg(u,s)D_{s,z}\left\{\frac{dY(u)}{du}\right\}\sigma(s)\ell(dz)\,ds\,du \\
&=\int_{0}^t\int_{\mathbb{R}}\int_s^tzg(u,s)\left(\frac{dY(u)}{du}+D_{s,z}\left\{\frac{dY(u)}{du}\right\}\right)\,du\sigma(s)\,\widetilde{N}(\delta z,\delta s) \\
&\qquad+\int_{0}^t\int_{\mathbb{R}}\int_s^tzg(u,s)D_{s,z}\left\{\frac{dY(u)}{du}\right\}\,du\,\ell(dz)\sigma(s)\,ds\,,
\end{align*}
where we have used the stochastic Fubini theorem in the last equality. Next, by the linearity of the Malliavin derivative, we find
\begin{align*}
\int_{0}^t\frac{dY(u)}{du}&\int_{0}^u\int_{\mathbb{R}}zg(u,s)\sigma(s)\,\widetilde{N}(dz,ds)\,du \\
&=\int_{0}^t\int_{\mathbb{R}}z\left(\int_s^tg(u,s)\frac{dY(u)}{du}\,du+D_{s,z}\left\{\int_s^tg(u,s)\frac{dY(u)}{du}\,du\right\}\right)\sigma(s)\,\widetilde{N}(\delta z,\delta s) \\
&\qquad+\int_{0}^t\int_{\mathbb{R}}zD_{s,z}\left\{\int_s^tg(u,s)\frac{dY(u)}{du}\,du\right\}\,\ell(dz)\sigma(s)\,ds\,,
\end{align*}
As a last step, by again doing an integration by parts as in the \VB case, and collecting terms, we reach
\begin{align}\begin{split}\label{YdXJ}
\int_{0}^tY(s)\,dX(s)&=\int_{0}^t\int_{\mathbb{R}}z\left(\mathcal{K}_g(Y)(t,s)+D_{s,z}\left\{\mathcal{K}_g(Y)(t,s)\right\}\right)\sigma(s)\,\widetilde{N}(\delta z,\delta s) \\
&\qquad+\int_{0}^t\int_{\mathbb{R}}z\left(D_{s,z}\left\{\mathcal{K}_g(Y)(t,s)\right\}\right)\,\ell(dz)\sigma(s)\,ds\,.
\end{split}
\end{align}
This relation involves a Malliavin derivative for L\'evy processes and a Skorohod integral for Poisson random measures, along with the operator $\mathcal{K}_g$. 

Next, we introduce the class of integrands $\widetilde{\mathcal{I}}^X(0,t)$ for stochastic integration with respect to pure--jump \V processes. 
\begin{defi}\label{defpj}Let  Assumption B be  satisfied and  let $X$ be given by \eqref{def-lss}.  Assume that for  $s\in [0.T] $ the mapping $g(u,s)$,  defined for $u\in (s,T]$,  is of bounded variation. Fix  $t\in]0,T]$. 

A process
$$
s\mapsto Y(s)\,,
$$ 
for $s\in[0,t]$ belongs to $\widetilde{\mathcal{I}}^X(0,t)$ if the following conditions are satisfied:
\begin{enumerate}
\item For     $s\in [0,t]$ the process  
$(Y(u)-Y(s))_{u\in (s,t]}$  is integrable with respect to $g(du,s)$ $a.s.$

\item $\mathcal{K}_g(Y)(t,s)$ is Malliavin differentiable with respect to $D_{s,z}$ for $(s,z)\in[0,t]\times\mathbb{R}$, with
$$
(s,z)\mapsto zD_{s,z}\left\{\mathcal{K}_g(Y)(t,s)\right\}\sigma(s),
$$
being $\ell(dz)\,ds$-integrable.

\item The mapping
$$
(s,z)\mapsto z\left(\mathcal{K}_g(Y)(t,s)+D_{s,z}\left\{\mathcal{K}_g(Y)(t,s)\right\}\right)\sigma(s)\mathbb{I}_{[0,t]}(s),
$$
is Skorohod integrable on $[0,T]\times\mathbb{R}$ with respect to $\widetilde{N}(ds,dz)$. 
\end{enumerate}
Suppose that $s\mapsto Y(s)$ for $s\in[0,t]$ belongs to $\widetilde {\mathcal{I}}^X(0,t)$. Then the stochastic integral of $Y$ with respect to $X$ is defined as
\begin{align*}
\int_{0}^tY(s)\,dX(s)&= \int_0^t\int_{\mathbb{R}}z\left(\mathcal{K}_g(Y)(t,s)+D_{s,z}\left\{\mathcal{K}_g(Y)(t,s)\right\}\right)\sigma(s)\widetilde{N}(\delta z,\delta s) \\
&\qquad+\int_{0}^t\int_{\mathbb{R}}zD_{s,z}\left\{\mathcal{K}_g(Y)(t,s)\right\}\,\ell(dz)\sigma(s)\,ds\ .
\end{align*}
\end{defi}
The first integral on the right-hand side  is defined in the sense of Subsection~\ref{subsect-levymall}, where $\int_0^t:= \int_0^T\mathbb{I}_{[0,t]}$.

\subsection{Further remarks}
In order to verify conditions (1) in   Definition~\ref{defbm} and \ref{defpj} we must study existence of   $\mathcal{K}_g(h)$. 
Recall that 
\begin{equation*}
\mathcal{K}_g(h)(t,s)= h(s)g(t,s) + \int_s^t (h(u)-h(s))\, g(du,s). 
\end{equation*}
Thus, $\mathcal{K}_g(h)(t,s)$ exists if e.g.\ $h(t)= z \mathbb{I}_{\{a\leq t\leq b\}} $ 
for some constants $z\in \R$ and $a<b$. 
 If  the measure   $g(du,s)$  is finite on any finite interval   then $\mathcal{K}_g(h)(t,s)$ exists if $h$ is bounded on compacts and in particular if $u\mapsto h(u)$ is  \ca\ or c\`agl\`ad. This covers the turbulence example in equation \eqref{toyeq}   with $\nu\geq 1$ and fractional Brownian motion  with $H\geq 1/2$. In general, however, some kind of smoothness of $h(t)$  near $t=s$ is needed  to ensure existence of $\mathcal{K}_g(h)(t,s)$. For example, assume $u\mapsto g(u,s)$ is $C^1$ with $\frac{d}{du} g(u,s)\sim (u-s)^\alpha$ as $u\downarrow s$ where $\alpha>-2$. If  there is a constant $K$  such that $|h(u) -h(s)|\leq K(u-s)$ for all $u>s$, then   $\mathcal{K}_g(h)(t,s)$ exists.  This covers the  turbulence example  (equation \eqref{toyeq}), where    $\alpha= \nu-2$, as well as fractional Brownian motion, equations \eqref{fbm1} or \eqref{fbm2},  where  $\alpha= H-3/2$.

The following Lemma gives useful information in the  $\mathcal{LSS}$ case. 
\begin{lem}
Suppose $g$ is a shift-kernel, i.e.~$g(t,s):=g(t-s)$. 
If $f$ is Lebesgue-Stieltjes integrable on $[s,t]$ with respect to $g(du,s)$, then $f(s+\cdot)$ is Lebesgue-Stieltjes integrable on $[0,t-s]$ with respect to $g(du)$ and
\begin{equation}\label{stateq}
\int_s^tf(u)\,g(du,s)=\int_0^{t-s}f(s+u)\,g(du)\,.
\end{equation}
\end{lem}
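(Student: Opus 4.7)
The plan is to recognize that the shift-kernel structure makes the measure $g(du,s)$ on $[s,t]$ the image (pushforward) of the measure $g(du)$ on $[0,t-s]$ under the translation $\tau_s\colon u\mapsto u+s$. Once this identification is made, the asserted equality is just the standard change-of-variables formula for Lebesgue--Stieltjes integrals under a translation.

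First I would verify the equality on indicator functions. For $s\leq a\leq b\leq t$ and $f=\mathbb{I}_{[a,b]}$, the left-hand side equals $g(b,s)-g(a,s)=g(b-s)-g(a-s)$, while the right-hand side equals
\begin{equation*}
\int_0^{t-s}\mathbb{I}_{[a,b]}(s+u)\,g(du)=\int_{a-s}^{b-s}g(du)=g(b-s)-g(a-s).
\end{equation*}
This confirms the identity on the generating $\pi$-system of intervals and, by a standard Dynkin/monotone-class argument applied to the finite signed measures obtained by restriction, on all Borel measurable indicators.

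Next I would extend by linearity to finite linear combinations of indicators (simple functions). For a general measurable $f$, I would argue in the standard two-step fashion: split the signed measure $g(du,s)$ into its Jordan components (or work with the total variation measure $|g|(du,s)$, which is itself a shift of $|g|(du)$ by the same translation), apply monotone convergence to nonnegative $f$ to obtain the change-of-variables identity for $|g|$, and conclude that $f$ is $g(du,s)$-integrable on $[s,t]$ if and only if $f(s+\cdot)$ is $g(du)$-integrable on $[0,t-s]$; then split $f=f^+-f^-$ and use linearity. The integrability hypothesis in the Lemma then automatically transfers between the two sides.

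The only real subtlety is that a Lebesgue--Stieltjes integrator need not be monotone, so $g(du,s)$ is in general a signed measure of locally bounded variation; I would therefore state the translation-invariance of the Jordan decomposition explicitly, namely that if $g=g_+ - g_-$ with $g_\pm$ nondecreasing, then $g(u,s)=g(u-s)$ induces the decomposition $g_\pm(du,s)$ as the translates of $g_\pm(du)$. After that observation the rest is routine measure theory, so I do not expect a substantive obstacle.
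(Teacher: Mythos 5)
Your proposal is correct and takes essentially the same route as the paper: the paper's proof likewise establishes that $g((a,b],s)=g(b-s)-g(a-s)=g((a-s,b-s])$ on half-open intervals, deduces translation invariance of the Lebesgue--Stieltjes measures, and then concludes \lq\lq by definition of the Lebesgue integral.\rq\rq\ You simply make explicit the monotone-class extension and the Jordan-decomposition step for the signed case that the paper compresses into that final phrase (noting only that your interval check should be phrased for $(a,b]$ rather than $[a,b]$ to match the Stieltjes increments exactly).
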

\begin{proof}
Note that if $E\in\mathcal{B}([s,t])$, then $F:=E-s\in\mathcal{B}([0,t-s]) $, where $\mathcal{B}(A)$ is the Borel $\sigma$--algebra on $A$, an interval of $\mathbb{R}$. Since for any 
 $(a,b]\in\mathcal{B}([s,t])$ we have that $(a-s,b-s]\in\mathcal{B}([0,t-s])$,  it holds that 
$$
g((a,b],s)=g(b-s)-g(a-s)=g((a-s,b-s])\,.
$$
implying 
  translation invariance of the Lebesgue-Stieltjes measures $g(d \cdot, s)$ and $g(d \cdot)$. Hence, by definition of the Lebesgue  integral the Lemma holds. 
\end{proof}
The Lemma gives us the form of $\mathcal{K}_g$ in the case of $g$ being a shift-kernel:
\begin{equation}
\mathcal{K}_g(h)(t,s)=h(s)g(t-s)+\int_0^{t-s}\left(h(s+u)-h(s)\right)g(du)\,.
\end{equation}
Note that $\mathcal{K}_g(h)$ does not become a shift kernel. 

\section{Some properties and applications}\label{RefPropApplications}
In this section we will either work under Assumption A, in which case  $X$ is  given by \eqref{def-bss}, or under Assumption B, in which case $X$ is given by \eqref{def-lss}. Let $L=B$ when working under Assumption A and $L= \Lp$ when working under Assumption B.

\subsection{Fundamental properties} 
Let $0<t_1 <t_2$ and assume $s\mapsto Y(s)$ is in $\mathcal{I}^X(0,t_1)$  (resp.\ in $\widetilde {\mathcal{I}}^X(0,t_1)$).  
Then $s\mapsto Y(s)\mathbb{I}_{\{s\leq t_1\}}$ is in  $\mathcal{I}^X(0,t_2)$ (resp.\ in $\widetilde {\mathcal{I}}^X(0,t_2)$), and we have the usual rule 
\begin{equation*}
\int_0^{t_2}Y(s)\mathbb{I}_{\{s\leq t_1\}}\, dX(s)= \int_0^{t_1}Y(s)\, dX(s).
\end{equation*}

The definition of a stochastic integral  is linear: If $Y$ and $Z$ are two processes in $\mathcal{I}^X(0,t)$  (resp.\ in  $\widetilde {\mathcal{I}}^X(0,t)$) and  $a,b$ two constants, then we easily see by linearity of the Malliavin derivatives, Skorohod integration and Lebesgue integration that 
$$
\int_{0}^t(aY(s)+bZ(s))\,dX(s)=a\int_{0}^tY(s)\,dX(s)+b\int_{0}^tZ(s)\,dX(s)\,.
$$
This in particular implies that if $0\leq u<v\leq t$ and   $s\mapsto Y(s)$ is in $\mathcal{I}^X(0,u)$ and in $\mathcal{I}^X(0,v)$ (resp.\  in $\widetilde {\mathcal{I}}^X(0,u)$ and in $\widetilde {\mathcal{I}}^X(0,v)$), then $s\mapsto Y(s)\mathbb{I}_{\{u<s\leq v\}}$ is in 
$\mathcal{I}^X(0,t)$  (resp.\ in $\widetilde {\mathcal{I}}^X(0,t)$) and 
\begin{equation*}
\int_0^t Y(s)\mathbb{I}_{\{u<s\leq v\}}\, dX(s)= \int_0^vY(s)\, dX(s) - \int_0^uY(s)\, dX(s)\, .
\end{equation*}
Since it is obvious that $\mathcal{K}_g(1)(t,s) = g(t,s)$ for $s<t$ and $D_s \mathcal{K}_g(Y)(t,s) =\mathcal{D}_{s,z} \mathcal{K}_g(Y)(t,s)=0$ when $Y$ is deterministic,  it follows that 
\begin{equation}\label{ex1}
\int_0^t 1\, dX(s) = X(t)\quad
\text{and, more generally,} \quad \int_0^t\mathbb{I}_{\{u<s\leq v\}}\, dX(s) = X(v)- X(u).
\end{equation} 
The following integration--by--parts formulae  hold:
\begin{proposition}\label{ibp}
(1) Let Assumption A be satisfied and $X$ be  the $\mathcal{VMBV}$ process given by \eqref{def-bss}. 
Let $s\mapsto Y(s)$ be an element in $\mathcal{I}^X(0,t)$. Suppose $Z$ is a bounded random variable such that the process $s\mapsto ZY(s)\in\mathcal{I}^X(0,t)$. Then it holds that 
\begin{align}\label{browcase}
\int_{0}^tZY(s)\,dX(s)&=Z\int_0^tY(s)\,dX(s). 
\end{align}
(2)  Let Assumption B be satisfied and  $X$ be  the $\mathcal{VMLV}$ process given by \eqref{def-lss}. Let $s\mapsto Y(s)$ be an element in $\widetilde{\mathcal{I}}^X(0,t)$. Suppose $Z$ is a bounded random variable such that the process $s\mapsto ZY(s)\in\widetilde{\mathcal{I}}^X(0,t)$. Then it holds that
\begin{align}\begin{split}\label{purecase}
\int_{0}^tZY(s)\,dX(s)&=Z\int_0^tY(s)\,dX(s) \\
&\quad-\int_0^t\int_{\mathbb{R}}zD_{s,z}\left\{\mathcal{K}_g(Y)(t,s)\right\}D_{s,z}\left\{Z\right\}\sigma(s)\,\left(\widetilde{N}(\delta z,\delta s)+\ell(dz)\,ds\right)\,. \end{split} 
\end{align}

\end{proposition}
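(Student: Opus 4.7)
The common strategy is to recognize that $\mathcal{K}_g$ is linear in its argument, and since the random variable $Z$ does not depend on the integration variable $u$ appearing in $\mathcal{K}_g$, we have the fundamental identity
$$\mathcal{K}_g(ZY)(t,s) = Z\,\mathcal{K}_g(Y)(t,s).$$
Both parts then proceed by substituting this identity into the definition of $\int_0^t ZY(s)\,dX(s)$ and pulling $Z$ outside the Skorohod integral using the Malliavin integration-by-parts formula (Proposition \ref{prop2} or \ref{prop4}), after which the required cancellations with the Lebesgue piece arise from the Malliavin product rule.

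For Part (1), the Skorohod integrand of $\int_0^t ZY(s)\,dX(s)$ is $Z\mathcal{K}_g(Y)(t,s)\sigma(s)$. By Proposition \ref{prop2} this integral equals
$$Z\int_0^t\mathcal{K}_g(Y)(t,s)\sigma(s)\,\delta B(s) \;-\; \int_0^t \mathcal{K}_g(Y)(t,s)\sigma(s)\,D_sZ\,ds.$$
On the other hand the Lebesgue piece $\int_0^t D_s\{Z\mathcal{K}_g(Y)(t,s)\}\sigma(s)\,ds$ expands via the standard Malliavin product rule into $\int_0^t \mathcal{K}_g(Y)(t,s)\sigma(s)D_sZ\,ds + Z\int_0^t D_s\{\mathcal{K}_g(Y)(t,s)\}\sigma(s)\,ds$. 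Summing the two, the terms containing $D_sZ\cdot \mathcal{K}_g(Y)(t,s)\sigma(s)$ cancel and we recover $Z\int_0^t Y(s)\,dX(s)$, as claimed.

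For Part (2), write $K(s) := \mathcal{K}_g(Y)(t,s)$ and $\Phi(s,z) := z(K(s) + D_{s,z}K(s))\sigma(s)$, which is the Skorohod integrand for $\int_0^t Y(s)\,dX(s)$. Using the product rule $D_{s,z}(ZK(s)) = K(s)D_{s,z}Z + Z D_{s,z}K(s)$, I would rewrite the Skorohod integrand of $\int_0^t ZY(s)\,dX(s)$ as
$$\Phi(s,z)(Z + D_{s,z}Z) \;-\; z\, D_{s,z}K(s)\, D_{s,z}Z\,\sigma(s).$$
Applying Proposition \ref{prop4} to the first summand converts it into $Z\int\Phi\,\widetilde{N}(\delta z,\delta s) - \int \Phi\, D_{s,z}Z\,\ell(dz)\,ds$. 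The Lebesgue integrand $zD_{s,z}(ZK(s))\sigma(s)$ splits as $Z\cdot z D_{s,z}K(s)\sigma(s) + zK(s)D_{s,z}Z\,\sigma(s)$. When all four pieces are collected, every cross-term weighted by $zK(s)\,D_{s,z}Z\,\sigma(s)$ cancels, and the only surviving residual is $-\int_0^t\int_{\R} z\,D_{s,z}K(s)\,D_{s,z}Z\,\sigma(s)\,(\widetilde{N}(\delta z,\delta s)+\ell(dz)\,ds)$, which is exactly the extra term in the stated formula.

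The main obstacle is the careful algebraic bookkeeping in Part (2): Proposition \ref{prop4} inserts the additional $D_{s,z}Z$ factor into the Skorohod integrand (unlike the clean Gaussian case), so one must identify precisely which contributions combine, which cancel against the Lebesgue expansion of $D_{s,z}(ZK(s))$, and which leave residuals against both $\widetilde{N}(\delta z,\delta s)$ and $\ell(dz)\,ds$. A secondary technical point is ensuring that each Skorohod and Lebesgue integral is well-defined: this is essentially hypothetical since we assume $ZY\in\widetilde{\mathcal{I}}^X(0,t)$, but one still needs to check, using linearity of $D_{s,z}$ and the hypothesis that $Z$ is bounded, that the residual term $zD_{s,z}K(s)D_{s,z}Z\,\sigma(s)$ is integrable in the required senses.
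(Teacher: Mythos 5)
Your proposal is correct and takes essentially the same route as the paper's proof: both rest on the identity $\mathcal{K}_g(ZY)(t,s)=Z\,\mathcal{K}_g(Y)(t,s)$, the product rule for the Malliavin derivative, and the integration-by-parts formulae of Propositions~\ref{prop2} and~\ref{prop4}, with the same add-and-subtract of the cross term $z\,D_{s,z}\{\mathcal{K}_g(Y)(t,s)\}\,D_{s,z}\{Z\}\,\sigma(s)$ producing the residual against both $\widetilde{N}(\delta z,\delta s)$ and $\ell(dz)\,ds$. The only differences are cosmetic --- the paper sets $\sigma\equiv 1$ without loss of generality and applies Proposition~\ref{prop4} separately to the two summands $z\mathcal{K}_g(Y)(t,s)$ and $zD_{s,z}\{\mathcal{K}_g(Y)(t,s)\}$ rather than once to their sum --- except that the paper supports the simple product rule $D_{s,z}\{Z\mathcal{K}_g(Y)(t,s)\}=Z\,D_{s,z}\{\mathcal{K}_g(Y)(t,s)\}+\mathcal{K}_g(Y)(t,s)\,D_{s,z}\{Z\}$ by citing Lemma~3.11 of the DNMBOP reference, a citation you should add as well, since for the jump-case Malliavin derivative a product rule without an extra cross term is not automatic.
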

\begin{proof}
Without loss of generality, we suppose for notational simplicity that $\sigma(s)=1$. 
Note that  
\begin{equation}\label{extra}
\mathcal{K}_g(ZY)(t,s)=Z\mathcal{K}_g(Y)(t,s)\,.
\end{equation}
Proof of (1). By the product rule for the Malliavin derivative
\begin{equation}\label{prule}
D_s\left\{Z\mathcal{K}_g(Y)(t,s)\right\}=D_s\left\{Z\right\}\mathcal{K}_g(Y)(t,s)+ZD_s\left\{\mathcal{K}_g(Y)(t,s)\right\}\,.
\end{equation}
Hence, by \eqref{extra} we have 
\begin{align*}
\int_0^tZY(s)\,dX(s)&=\int_0^tZ\mathcal{K}_g(Y)(t,s)\,\delta B(s)\\
&\qquad+\int_0^tZD_s\left\{\mathcal{K}_g(Y)(t,s)\right\}+D_s\left\{Z\right\}\mathcal{K}_g(Y)(t,s)\,ds.
\end{align*} 
Applying  Proposition~\ref{prop2} this gives 
\begin{align*}
\int_0^tZY(s)\,dX(s)&=Z\left(\int_0^t\mathcal{K}_g(Y)(t,s)\,\delta B(s)+\int_0^tD_s\left\{\mathcal{K}_g(Y)(t,s)\right\}\,ds\right) \\
&=Z\int_0^t Y(s)\, dX(s),
\end{align*}
which is (1). 

Proof of (2).  By  \citet[Lemma~3.11]{DNMBOP} we have \eqref{prule} with $D_s$ replaced with $D_{s,z}$. Hence by this equation and \eqref{extra}, 
\begin{align*}
\int_0^tZY(s)\,dX(s)&= 
\int_0^t\int_{\mathbb{R}}z\mathcal{K}_g(Y)(t,s)\left(Z+D_{s,z}\left\{Z\right\}\right)\,\widetilde{N}(\delta z,\delta s) \\
&\qquad+\int_0^t\int_{\mathbb{R}}zD_{s,z}\left\{\mathcal{K}_g(Y)(t,s)\right\}\left(Z+D_{s,z}\left\{Z\right\}\right)\,\widetilde{N}(\delta z,\delta s) \\
&\qquad-\int_0^t\int_{\mathbb{R}}zD_{s,z}\left\{\mathcal{K}_g(Y)(t,s)\right\}D_{s,z}\left\{Z\right\}\,\widetilde{N}(\delta z,\delta s) \\
&\qquad+\int_0^t\int_{\mathbb{R}}z\left(ZD_{s,z}\left\{\mathcal{K}_g(Y)(t,s)\right\}+D_{s,z}\left\{Z\right\}\mathcal{K}_g(Y)(t,s)\right)\,\ell(dz)\,ds
\end{align*} 
Invoking Proposition~\ref{prop4} 
 gives
\begin{align*}
\int_0^tZY(s)\,dX(s)&=Z\int_0^t\int_{\mathbb{R}}z\mathcal{K}_g(Y)(t,s)\,\widetilde{N}(\delta z,\delta s)-\int_0^t\int_{\mathbb{R}}z\mathcal{K}_g(Y)(t,s)D_{s,z}\left\{Z\right\}\,\ell(dz)\,ds \\
&\qquad+Z\int_0^t\int_{\mathbb{R}}zD_{s,z}\left\{\mathcal{K}_g(Y)(t,s)\right\}\,\widetilde{N}(\delta z,\delta s)\\
&\qquad-\int_0^t\int_{\mathbb{R}}
zD_{s,z}\left\{\mathcal{K}_g(Y)(t,s)\right\}D_{s,z}\left\{Z\right\}\,\ell(dz)\,ds \\
&\qquad-\int_0^t\int_{\mathbb{R}}zD_{s,z}\left\{\mathcal{K}_g(Y)(t,s)\right\}D_{s,z}\left\{Z\right\}\,\widetilde{N}(\delta z,\delta s) \\
&\qquad+\int_0^t\int_{\mathbb{R}}z\left(ZD_{s,z}\left\{\mathcal{K}_g(Y)(t,s)\right\}+D_{s,z}\left\{Z\right\}\mathcal{K}_g(Y)(t,s)\right)\,\ell(dz)\,ds\,.
\end{align*}
Hence, the result follows by collecting terms.
\end{proof}
We note that  \eqref{browcase} is a classical rule of calculation, which,  by Proposition    \ref{ibp} (2), is generally only satisfied in the Gaussian case. 
But \eqref{browcase}  is also satisfied in the pure--jump case when the integrand $Y$  is deterministic since we then have     $D_{s,z}\left\{\mathcal{K}_g(Y)(t,s)\right\}=0$.

Let us consider the integration of \lq\lq simple\rq\rq\   integrands.  Suppose that  $0\leq t_0<t_1<t_2<\ldots<t_n\leq t$ is a partition of the interval $[0,t]$ and let $Z_i$ be bounded random variables such that $s\mapsto Z_i\mathbb{I}_{\{t_i\leq s<t_{i+1}\}}$ belongs to $\mathcal{I}^X(0,t)$ respectively to $\widetilde{\mathcal{I}}^X(0,t)$.  Then we call 
$$
Y(s)=\sum_{i=0}^{n-1}Z_i\mathbb{I}_{\{t_i\leq s<t_{i+1}\}}\,
$$
a simple  integrand.  Since $s\mapsto \mathbb{I}_{(t_i\leq s<t_{i+1}\}}$ is deterministic it follows from Proposition~\ref{ibp}, equation \eqref{ex1} and linearity of the integral that 
\begin{equation}
\int_0^tY(s)\,dX(s)=\sum_{i=0}^{n-1}Z_i\Delta X(t_i)\,,
\end{equation}
with $\Delta X(t_i)=X(t_{i+1})-X(t_i)$.  Thus we see that the integral coincides with the simple integral on simple (non-adapted)  integrands.  We remark that this property generally only holds for {\it deterministic} simple integrands for the integral defined in \cite[p.\ 773]{AMN}.

We next show that our integral shares the {\it localization} property, which is also one of the basic properties of the It\^o integral.
\begin{proposition}
Suppose that $Y(s)=0, a.e.\ s\leq t$, $a.s.$ Then, under Assumptions  A or   B,  
$$
\int_{0}^tY(s)\,dX(s)=0\,, a.s.\,,
$$
\end{proposition}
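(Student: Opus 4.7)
The plan is to reduce the claim to showing $\mathcal{K}_g(Y)(t,s)=0$ for Lebesgue-a.e.\ $s\in[0,t]$, a.s. Indeed, by Definitions~\ref{defbm} and~\ref{defpj}, $\int_0^t Y(s)\,dX(s)$ is written as the sum of a Skorohod integral (against $\delta B$ or $\widetilde N(\delta z,\delta s)$) and a Lebesgue $ds$ (resp.\ $\ell(dz)\,ds$) integral, each whose integrand is $\mathcal{K}_g(Y)(t,s)\sigma(s)$ or a Malliavin derivative thereof (multiplied by $z$ in the jump case). All of these operators act on equivalence classes in the relevant $L^2$ space, so an a.e.-zero $\mathcal{K}_g(Y)(t,s)$ forces each summand to vanish a.s.

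To verify that $\mathcal{K}_g(Y)$ vanishes a.e., expand
\[
\mathcal{K}_g(Y)(t,s)=Y(s)g(t,s)+\int_s^t\bigl(Y(u)-Y(s)\bigr)\,g(du,s)=Y(s)g(t,s)-Y(s)\int_s^tg(du,s)+\int_s^tY(u)\,g(du,s).
\]
For Lebesgue-a.e.\ $s\leq t$ one has $Y(s)=0$ a.s., so the first two summands disappear. The remaining term $\int_s^tY(u)\,g(du,s)$ vanishes for a.e.\ $s$ whenever the Lebesgue--Stieltjes measure $g(du,s)$ is absolutely continuous in $u$ with respect to Lebesgue measure, which covers all the principal kernels considered in the paper (the turbulence kernel, the CARMA kernels, and the fractional Brownian motion kernels). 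In the fully general setting, a Fubini--Tonelli argument applied to the bivariate measure $g(du,s)\,ds$ on $\{(s,u)\colon 0\leq s<u\leq t\}$ transfers the Lebesgue-nullity of $\{u\colon Y(u)\neq 0\}$ to yield $\int_0^t\int_s^t|Y(u)|\,g(du,s)\,ds=0$ a.s., whence the inner integral is zero for a.e.\ $s$.

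Once $\mathcal{K}_g(Y)(t,s)=0$ for a.e.\ $s$ a.s., linearity of the Malliavin derivative $D_s$ (resp.\ $D_{s,z}$) gives that its derivative is also a.e.\ zero, and hence both summands in the defining formula \eqref{defbr}, and analogously both summands in the pure-jump definition, vanish, yielding the claim. The main delicate point is the middle step: transferring the Lebesgue-nullity of $\{u\colon Y(u)\neq 0\}$ through the Lebesgue--Stieltjes measure $g(du,s)$. This is immediate when $g$ has a Radon--Nikodym derivative in its first argument, which is the situation of practical interest, but the general case needs the Fubini argument together with $\sigma$-finiteness of the joint measure.
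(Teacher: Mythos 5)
Your overall route is the same as the paper's: reduce everything to showing $\mathcal{K}_g(Y)(t,s)\sigma(s)=0$ for a.e.\ $s\leq t$ a.s., and then conclude that both summands in Definition~\ref{defbm} (resp.\ Definition~\ref{defpj}) vanish because the Skorohod integral, the Malliavin derivative and the Lebesgue (resp.\ $\ell(dz)\,ds$) integral are well defined on equivalence classes -- the paper does exactly this, citing Nualart's Proposition~1.3.6 for the $\delta B$ term and Corollary~3.9 of Di Nunno et al.\ for the $\widetilde N(\delta z,\delta s)$ term. One cosmetic quibble there: the vanishing of $D_s\{\mathcal{K}_g(Y)(t,s)\}$ is not a consequence of \emph{linearity} but of the fact that the derivative operator annihilates the zero class of $L^2(P)$ (your earlier sentence about equivalence classes is the correct justification; the paper is equally terse on the diagonal evaluation at $r=s$).

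The genuine gap is your ``fully general'' Fubini--Tonelli step. Applying Tonelli to the mixed measure $|g|(du,s)\,ds$ on $\{(s,u):0\leq s<u\leq t\}$ gives $\int_0^t\int_s^t|Y(u)|\,|g|(du,s)\,ds=\int_0^t|Y(u)|\,\nu(du)$, where $\nu(A)=\int_0^t|g|\bigl(A\cap(s,t],s\bigr)\,ds$ is the $u$-marginal. Lebesgue-nullity of $\{u:Y(u)\neq 0\}$ forces this to vanish only if $\nu$ is absolutely continuous with respect to Lebesgue measure, and that is \emph{not} automatic: take $u\mapsto g(u,s)$ with a common jump of size $\Delta$ at a fixed point $u_0\in(0,t)$, the same for every $s<u_0$. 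Then $g(du,s)$ has an atom at $u_0$, $\nu$ has an atom at $u_0$, and with $Y(u)=Z\,\mathbb{I}_{\{u=u_0\}}$ for a nonzero bounded $Z\in\mathbb{D}^{1,2}$ (so $Y(u)=0$ for a.e.\ $u$) one gets $\int_s^tY(u)\,g(du,s)=Z\Delta\neq 0$ for every $s<u_0$, so $\mathcal{K}_g(Y)(t,\cdot)$ does not vanish a.e.\ and the resulting integral $\int_0^tY(s)\,dX(s)$ is genuinely nonzero. So your Fubini argument does not close the general case; what is actually needed is precisely that $g(du,s)$ charges no Lebesgue-null sets (your absolute-continuity hypothesis, valid for the turbulence, CARMA and fractional kernels, is the practical sufficient condition). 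To be fair, your instinct about where the delicacy lies is sound: the paper's own proof simply asserts $\int_s^t(Y(u)-Y(s))\,g(du,s)=0$ for a.e.\ $s$ without comment, i.e.\ it tacitly makes the same regularity assumption on $g$ that your counterexample-free special case requires -- but the proposed repair by Fubini is invalid in the stated generality, and the example above shows no argument can remove the assumption entirely.
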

\begin{proof}
Since $Y(s)=0$ for $a.e.\ s\leq t$, $a.s.$, we have that
$$
\int_s^t (Y(u)-Y(s))\,g(du,s)=0\,,
$$ 
for $a.e.\ s\leq t$ $a.s.$ Thus, $\mathcal{K}_g(Y)(t,s)\sigma(s)=0$ for $a.e.\ s\leq t$, $a.s.$ Then, by \citet[Proposition 1.3.6 ]{N} we find that 
$$
\int_0^t\mathcal{K}_g(Y)(t,s)\sigma(s)\,\delta B(s)=0,\,\quad a.s.
$$
 Furthermore, we have $D_s\mathcal{K}_g(Y)(t,s)=0$ and $D_{s,z}\mathcal{K}_g(Y)(t,s)=0$, except on sets of measure zero. Hence,
using the property of Lebesgue integrations, we find that 
$$
\int_0^t \left(D_s\mathcal{K}_g(Y)(t,s)+\int_{\mathbb{R}}zD_{s,z}\mathcal{K}_g(Y)(t,s)\,\ell(dz)\sigma(s)\right)\,ds=0,\, \quad a.s.
$$
Finally, by making use of Corollary 3.9 in~\cite{DNMBOP}, we find that
$$
\int_0^t\int_{\mathbb{R}}z\left(\mathcal{K}_g(Y)(t,s)+D_{s,z}\left\{\mathcal{K}_g(Y)(t,s)\right\}\right)\sigma(s)\,\widetilde{N}(\delta z,\delta s)\, { = 0}.
$$
Hence, the result follows. 
\end{proof}

\subsection{Applications}
\subsubsection{Deterministic integrands}
In this subsection we focus on deterministic integrands on the form $Y(s)=h(t,s)$, with $s\mapsto h(t,s)$ being a measurable function on $[0,t]$ such that $\mathcal{K}_g(h)(t,s)$ is well-defined (that is, $h(t,u)-h(t,s)$ is integrable with respect to $g(du,s)$).  Since we know that in this case the Malliavin derivatives in  \eqref{defbr} and \eqref{YdXJ} become zero, we have that 
\begin{align*}
\int_0^th(t,s)\,dX(s)
&=\int_0^t\mathcal{K}_g(h)(t,s)\sigma(s)\,dL(s)\, ,
\end{align*}
where we recall that $L= B$ or $L= \Lp$ depending on which Assumption we are working under. 
Thus, the integral of a deterministic function with respect to a \V process gives again a \V process. 

If we consider shift-kernel functions $h, g$, i.e., $h(t,s)=h(t-s), g(t,s)=g(t-s)$, we find that $\mathcal{K}_g(h)(t,s)$ depends on $s$ and $t$ only through $(t-s)$,
since from \eqref{stateq}
$$
\mathcal{K}_g(h)(t,s)=h(t-s)g(t-s)+\int_0^{t-s}\left(h(t-s-u)-h(t-s)\right)\,g(du)\,.
$$

Let us look at a simple example: Consider $h(u)=\exp(-\alpha u)$, $g(u)=\exp(-\beta u)$ for $\alpha, \beta > 0$ with  $\alpha\neq\beta$. Then, after a straightforward integration,
$$
\mathcal{K}_g(h)(t,s)=\frac{\alpha}{\alpha-\beta} e^{-\alpha(t-s)}-\frac{\beta}{\alpha-\beta} e^{-\beta(t-s)}
$$
and we find
\begin{align*}
\int_{0}^te^{-\alpha(t-s)}\,dX(s)&=\int_{0}^t\left\{\frac{\alpha}{\alpha-\beta} e^{-\alpha(t-s)}-\frac{\beta}{\alpha-\beta} e^{-\beta(t-s)}\right\}\sigma(s)\,dL(s)\,.
\end{align*}
If $\alpha=\beta$, we trace back and find
$$
\int_{0}^te^{-\alpha(t-s)}\,dX(s)=\int_{0}^t e^{-\alpha(t-s)}(1-\alpha(t-s))\sigma(s)\,dL(s)\,.
$$
\begin{rem}
This type of integral kernel appears in connection with stochastic volatility modelling for  commodity prices  based on the Schwartz dynamics and the Barndorff-Nielsen and Shephard model, see \cite{Benth-MF}.
\end{rem}
 Moreover, as the next Proposition shows, the integral of $h$ with respect to $X$ is also the solution of an  \vv--driven Ornstein-Uhlenbeck process:
\begin{proposition}\label{PropDetInt}
Assume that for all $s\in \R_+$ the mapping $(s,\infty)\ni u\mapsto e^{-\alpha(u-s)}-1$ is integrable with respect to $g(du,s)$, and   
\begin{align}\label{conv}
\lim_{t\downarrow s}(1-\exp(-\alpha(t-s)))g(t,s)=0\,.
\end{align}
Assume in addition that $t\mapsto \sigma(t)$ is pathwise locally bounded $a.s.$ Finally, assume that 
\begin{equation}\label{gass}
\int_0^t\int_u^t (g(s,u))^2\, ds\,du <\infty\, .
\end{equation}
Then, $s\mapsto e^{-\alpha(t-s)}$ is in $\mathcal{I}^X(0,t)$ for all $t>0$ when Assumption A is satisfied (or in $\widetilde{\mathcal{I}}^X(0,t)$ for all $t\leq T$ when Assumption B is satisfied) and  
$Y(t)=\int_0^t\exp(-\alpha(t-s))\,dX(s)$ solves the \vv--driven Ornstein--Uhlenbeck process
\begin{align}\label{OUSDE}
Y(t)=-\alpha\int_0^tY(s)\,ds+X(t)\,.
\end{align}
\end{proposition}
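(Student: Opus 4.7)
The plan is to exploit the simplification that occurs when the integrand is deterministic: by Definition~\ref{defbm} (respectively Definition~\ref{defpj}) the Malliavin derivative terms vanish, so that with $h_t(u):=e^{-\alpha(t-u)}$ one has $Y(t)=\int_0^t \mathcal{K}_g(h_t)(t,s)\sigma(s)\, dL(s)$ as a classical stochastic integral in the sense of Section~\ref{SectPrel}. Rewriting $\int_0^t Y(s)\,ds$ by swapping the order of integration via a stochastic Fubini theorem will then reduce \eqref{OUSDE} to a deterministic kernel identity, which I would establish by a Lebesgue--Stieltjes integration by parts where \eqref{conv} is precisely what kills the boundary term at the diagonal.

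First I would check that $s\mapsto e^{-\alpha(t-s)}$ lies in $\mathcal{I}^X(0,t)$ (respectively $\widetilde{\mathcal{I}}^X(0,t)$). Condition~(1) requires $u \mapsto Y(u)-Y(s) = e^{-\alpha(t-u)}(1 - e^{-\alpha(u-s)})$ to be $g(du,s)$-integrable on $(s,t]$; since $|e^{-\alpha(t-u)}|\le 1$ there, this follows from the assumption on $u\mapsto e^{-\alpha(u-s)}-1$. The Malliavin-derivative conditions are trivial for deterministic integrands, so only Skorohod integrability of $\mathcal{K}_g(h_t)(t,\cdot)\sigma(\cdot)$ remains; as this integrand is adapted, the Skorohod integral coincides with the It\^o (respectively compensated Poisson) integral, and the required $L^2$ control follows from boundedness of $h_t$, local boundedness of $\sigma$, and \eqref{gass}.

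With $Y(s)$ and $Y(t)$ realised as ordinary stochastic integrals, a stochastic Fubini theorem in the vein of \citet[Theorem IV.65]{P} (again using \eqref{gass} and local boundedness of $\sigma$) yields
\[
\alpha \int_0^t Y(s)\, ds
= \alpha \int_0^t\int_0^s \mathcal{K}_g(h_s)(s,u)\sigma(u)\, dL(u)\, ds
= \int_0^t \left(\alpha \int_u^t \mathcal{K}_g(h_s)(s,u)\, ds\right)\sigma(u)\, dL(u).
\]
Combining with $X(t)=\int_0^t g(t,u)\sigma(u)\, dL(u)$ and comparing integrands, \eqref{OUSDE} is equivalent to the pointwise kernel identity
\[
\mathcal{K}_g(h_t)(t,u) + \alpha \int_u^t \mathcal{K}_g(h_s)(s,u)\, ds = g(t,u), \qquad 0 \le u < t.
\]

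The main task is verifying this identity, and this is where \eqref{conv} enters. Expanding both $\mathcal{K}_g$-terms by their definitions, applying Fubini to swap $ds$ and $g(dw,u)$ in the double integral and evaluating the inner $s$-integral via $\int_w^t(e^{-\alpha(s-w)}-e^{-\alpha(s-u)})\,ds = \alpha^{-1}[1-e^{-\alpha(t-w)}+e^{-\alpha(t-u)}-e^{-\alpha(w-u)}]$, the various $g(dv,u)$-contributions telescope to $\int_u^t(1-e^{-\alpha(v-u)})\,g(dv,u)$, and the claim reduces to
\[
\int_u^t (1 - e^{-\alpha(v-u)})\, g(dv,u) + \alpha \int_u^t e^{-\alpha(v-u)} g(v,u)\, dv = (1 - e^{-\alpha(t-u)})\, g(t,u).
\]
This is exactly the Lebesgue--Stieltjes integration-by-parts identity applied to $f(v)=1-e^{-\alpha(v-u)}$ against $v\mapsto g(v,u)$ on $(u,t]$: the boundary at $v=t$ produces the right-hand side, while the one at $v=u^+$, namely $\lim_{v\downarrow u}(1-e^{-\alpha(v-u)})g(v,u)$, vanishes by \eqref{conv}, closing the argument.
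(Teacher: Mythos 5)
Your proposal is correct and takes essentially the same route as the paper's own proof: the deterministic integrand kills the Malliavin terms, stochastic Fubini (Protter's Theorem IV.65, justified through \eqref{gass} and pathwise local boundedness of $\sigma$) reduces \eqref{OUSDE} to the kernel identity $\mathcal{K}_g(h)(t,u)+\alpha\int_u^t\mathcal{K}_g(h)(s,u)\,ds=g(t,u)$, and that identity is settled exactly as in the paper by a Fubini interchange of $ds$ and $g(dv,u)$ followed by Lebesgue--Stieltjes integration by parts, with \eqref{conv} annihilating the boundary term at the diagonal. The only difference is one of explicitness: the paper devotes a separate lemma to verifying the Fubini condition, rewriting $\mathcal{K}_g(h)(s,u)=h(s)g(s,u)+\alpha\int_u^s h(v)g(v,u)\,dv$ and applying Jensen's inequality, whereas you assert this $L^2$ control from the same ingredients without carrying out the computation.
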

\begin{proof} By Assumption,  (1) in Definition~\ref{defbm} (resp.\ in Definition~\ref{defpj}) is satisfied and since the integrand is deterministic this implies that  $s\mapsto e^{-\alpha(t-s)}$ is in $\mathcal{I}^X(0,t)$  (resp.\ in $\widetilde{\mathcal{I}}^X(0,t)$ for all $t\leq T$).

Denote for simplicity $h(u)=\exp(-\alpha u)$, and we deduce from the definition of integration with respect to $X$ that
\begin{align*}
Y(t)+\alpha\int_0^tY(s)\,ds&=\int_0^th(t-u)\,dX(u)+\alpha\int_0^t\int_0^sh(s-u)\,dX(u)\,ds \\
&=\int_0^t\mathcal{K}_g(h)(t,u)\sigma(u)\,d L(u)+\alpha\int_0^t\int_0^s\mathcal{K}_g(h)(s,u)\sigma(u)\,d L(u)\,ds, 
\end{align*}
where we recall that $L$ is either $B$ or $\Lp$. 
Appealing to the stochastic Fubini Theorem  cf.\ details below   we find
\begin{align*}
Y(t)+\alpha\int_0^tY(s)\,ds
&=\int_0^t\left(\mathcal{K}_g(h)(t,u)+\alpha\int_u^t\mathcal{K}_g(h)(s,u)\,ds\right)\sigma(u)\,dL(u).
\end{align*}
Now, we have that 
\begin{multline*}
\mathcal{K}_g(h)(t,s) +\alpha\int_s^t\mathcal{K}_g(h)(u,s)\,du =e^{-\alpha(t-s)}g(t,s)+\int_s^t\left(e^{-\alpha(t-u)}-e^{-\alpha(t-s)}\right)\,g(du,s) \\
+\alpha\int_s^t\left(e^{-\alpha(u-s)}g(u,s)+\int_s^u\left(e^{-\alpha(u-v)}-e^{-\alpha(u-s)}\right)\,g(dv,s)\right)\,du\,.
\end{multline*}
But, by the Fubini theorem, we find
\begin{align*}
\alpha\int_s^t\int_s^u&\left(e^{-\alpha(u-v)}-e^{-\alpha(u-s)}\right)\,g(dv,s)\,du \\
 &=\alpha\int_s^t\int_v^t\left(e^{-\alpha(u-v)}-e^{-\alpha(u-s)}\right)\,du\,g(dv,s) \\
&=-\int_s^t \left(e^{-\alpha(t-v)}-e^{-\alpha(t-s)}\right)\,g(dv,s)+\int_s^t\left(1-e^{-\alpha(v-s)}\right)\,g(dv,s)\,.
\end{align*}
After doing an integration--by--parts and applying the limit condition on $g$, we get
\begin{equation}\label{yetanother}
\int_s^t\left(1-e^{-\alpha(v-s)}\right)\,g(dv,s)=(1-e^{-\alpha(t-s)})g(t,s)-\alpha\int_s^te^{-\alpha(v-s)}g(v,s)\,dv\,.
\end{equation}
This gives that 
\begin{align*}
\mathcal{K}_g(h)(t,s)&+\alpha\int_s^t\mathcal{K}_g(h)(u,s)\,du=g(t,s),
\end{align*}
and the proposition is proved except for the verification of the Fubini theorem, which is relegated to the following Lemma. 
\end{proof}
To justify the use of stochastic Fubini above  we have to verify the following condition from~\citet[Theorem IV.65]{P}.
\begin{lem}
Using the same notation as in the proof of Proposition \ref{PropDetInt},
\begin{equation}\label{Kgprop}
\int_0^t  \int_u^t (\mathcal{K}_g(h)(s,u))^2 \, ds\, \sigma^2(u)\, du <\infty, \quad a.s.
\end{equation}
\end{lem}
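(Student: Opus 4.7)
The main idea is to exploit the integral identity for $\mathcal{K}_g(h)$ that was derived during the proof of Proposition~\ref{PropDetInt} and combine it with Gronwall's inequality to control $\mathcal{K}_g(h)$ in $L^2$ by the corresponding $L^2$ norm of $g$, which is finite by assumption~\eqref{gass}.

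More specifically, in the main proof it was shown that for every fixed $u\geq 0$ and $s\geq u$,
\begin{equation*}
\mathcal{K}_g(h)(s,u) + \alpha\int_u^s \mathcal{K}_g(h)(r,u)\,dr = g(s,u).
\end{equation*}
Squaring this identity and using $(a+b)^2\leq 2a^2+2b^2$ together with the Cauchy--Schwarz inequality $\left(\int_u^s \mathcal{K}_g(h)(r,u)\,dr\right)^2 \leq (s-u)\int_u^s (\mathcal{K}_g(h)(r,u))^2\,dr$, I would obtain
\begin{equation*}
(\mathcal{K}_g(h)(s,u))^2 \leq 2(g(s,u))^2 + 2\alpha^2 (s-u)\int_u^s (\mathcal{K}_g(h)(r,u))^2\,dr.
\end{equation*}

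Next, I would fix $t>0$ and set $F_u(s):=\int_u^s (\mathcal{K}_g(h)(r,u))^2\,dr$. Integrating the above bound over $s\in[u,t]$ and using that $s\mapsto (s-u)$ is bounded by $(t-u)$ on this range yields an inequality of the form $F_u(t)\leq 2\int_u^t (g(s,u))^2\,ds + 2\alpha^2(t-u)\int_u^t F_u(s)\,ds$. Applying Gronwall's lemma produces the estimate
\begin{equation*}
\int_u^t (\mathcal{K}_g(h)(s,u))^2\,ds \leq 2\exp\!\left(2\alpha^2(t-u)^2\right)\int_u^t (g(s,u))^2\,ds,
\end{equation*}
so the left-hand side is bounded by $C(t)\int_u^t (g(s,u))^2\,ds$ for a deterministic constant $C(t)$ depending only on $\alpha$ and $t$.

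Finally, I would multiply by $\sigma^2(u)$ and integrate over $u\in[0,t]$. Since $t\mapsto\sigma(t)$ is pathwise locally bounded $a.s.$, I can bound $\sigma^2(u)$ by $\sup_{u\in[0,t]}\sigma^2(u)<\infty$ $a.s.$, which allows pulling this bound outside; then assumption~\eqref{gass}, $\int_0^t\int_u^t (g(s,u))^2\,ds\,du<\infty$, gives the required finiteness $a.s.$ There is no real obstacle: the only point deserving mild care is to ensure that Gronwall is applied with the correct monotone majorant (replacing $(s-u)$ by $(t-u)$ to get a constant coefficient) and that the measurability of $(s,u)\mapsto \mathcal{K}_g(h)(s,u)$ is enough to justify Fubini in the rearrangement $\int_u^t\int_u^s\to\int_u^t\int_r^t$ used inside the Gronwall step.
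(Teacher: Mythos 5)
Your argument is correct in substance but takes a genuinely different route from the paper. The paper does not use Gronwall's inequality: instead it solves the Volterra-type identity explicitly, showing via \eqref{yetanother} that
\begin{equation*}
\mathcal{K}_g(h)(s,u)=h(s)g(s,u)+\alpha\int_u^s h(v)g(v,u)\,dv,
\end{equation*}
with $h$ bounded by $1$, and then verifies \eqref{Kgprop} for the two terms separately --- the first directly from \eqref{gass}, the second by Jensen's inequality --- which yields a polynomial constant of order $t^2$. You instead treat the resolvent identity $\mathcal{K}_g(h)(s,u)+\alpha\int_u^s\mathcal{K}_g(h)(r,u)\,dr=g(s,u)$ as an integral inequality and close it with Gronwall. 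Your starting point is legitimate and non-circular: that identity is established in the proof of Proposition \ref{PropDetInt} by purely deterministic computations (ordinary Fubini, integration by parts, and condition \eqref{conv}), before the stochastic Fubini theorem --- whose justification is the whole point of this Lemma --- is ever invoked. What each approach buys: the paper's closed form gives a sharper constant and an explicit expression for $\mathcal{K}_g(h)$ that is reusable elsewhere; your Gronwall scheme never needs to solve the equation and would extend to any kernel satisfying such an identity, at the harmless cost of the exponential factor $e^{2\alpha^2(t-u)^2}$.

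One point you should make explicit, beyond the two you flag at the end: Gronwall applied to $F_u(s)=\int_u^s(\mathcal{K}_g(h)(r,u))^2\,dr$ requires knowing \emph{a priori} that $F_u$ is finite on $[u,t]$; if $F_u(t)=\infty$, the inequality $F_u(t)\leq A+b\int_u^t F_u(s)\,ds$ reads $\infty\leq\infty$ and Gronwall's conclusion fails. The patch is one line: the identity itself presupposes that $r\mapsto\mathcal{K}_g(h)(r,u)$ is Lebesgue integrable on $[u,t]$, so $\mathcal{K}_g(h)(\cdot,u)-g(\cdot,u)$ is bounded on $[u,t]$; since \eqref{gass} together with Tonelli gives $\int_u^t(g(s,u))^2\,ds<\infty$ for a.e.\ $u\in[0,t]$, it follows that $F_u(t)<\infty$ for a.e.\ $u$ --- and an a.e.-$u$ statement is all the final integration $\int_0^t\cdots\sigma^2(u)\,du$ requires. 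With that line added, your proof is complete; the concluding step (bounding $\sigma^2$ by its pathwise supremum over $[0,t]$ and invoking \eqref{gass}) coincides with the paper's.
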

\begin{proof}
Since $u\mapsto\sigma(u)$ is locally bounded we may and do assume that $\sigma(u)=1$. By definition of $\mathcal{K}_g(h)$ and \eqref{yetanother}, 
\begin{align*}
\mathcal{K}_g(h)(s,u)&= h(u)g(s,u) + \int_u^s (h(v)- h(u))\, g(dv,u)\\
& = h(u)g(s,u) +e^{-\alpha u}\int_u^s (e^{-\alpha(v-u)} -1)\, g(dv,u)\\
&= h(u)g(s,u)  -h(u)\{(1-e^{-\alpha(s-u)}) g(s,u) - \alpha\int_u^s h(v-u)g(v,u)\, dv\}  \\
&= h(s) g(s,u)+ \alpha\int_u^s h(v)g(v,u)\, dv\, .
\end{align*}
Thus, it suffices to verify \eqref{Kgprop} with $\mathcal{K}_g(h)$ replaced by the two terms on the right-hand side. 
Since $h$ is bounded we have by \eqref{gass} that  
\begin{equation*}
\int_0^t  \int_u^t (h(s)g(s,u))^2 \, ds\, du <\infty\, .
\end{equation*}
Since $h(v)$ is bounded by $1$ for $v\geq 0$ we have by   Jensen's inequality that 
\begin{align*}
&\int_0^t \int_u^t \left(\int_u^sh(v)g(v,u)\, dv\right)^2 \, ds\, du 
 \leq \int_0^t \int_u^t \int_u^s(g(v,u))^2(s-u)\, dv\, ds\, du
\\
&\qquad \leq t\int_0^t  \int_u^t \int_u^t(g(v,u))^2\, dv\, ds\,  du \leq t^2\int_0^t  \int_u^t(g(v,u))^2\, dv  du <\infty
\end{align*}
which gives \eqref{Kgprop}. 
\end{proof}
We notice that a sufficient condition for the limit condition in the proposition to hold is that the limit of $g(t,s)$ exists as $t\downarrow s$. 

\begin{ex}
Recall the  example from turbulence: 
$$
g(t,s)=g(t-s) =(t-s)^{\nu-1}\exp(-\lambda (t-s))
$$
for $\nu>\frac{1}{2}$. This gives a singularity at $s=t$ when $\nu \in (\frac{1}{2},1)$. However, appealing to the L'Hopital rule, it is easily verified that
the condition \eqref{conv} is satisfied.
Hence, the condition on the limit behaviour of $g$ is satisfied for all  $\nu>\frac{1}{2}$.  Similarly, for fractional Brownian motion with kernel  \eqref{fbm1} or \eqref{fbm2} there is a singularity at $t=s$ when $H\in (0,\frac{1}{2})$ but the same kind of argument shows that the limit behaviour of $g$ is satisfied for all $H\in (0,1)$. 
\end{ex}

Note that in general, the solution $Y(t)$ of the \vv--driven OU process in \eqref{OUSDE} is {\it not} in a 
shift-kernel-type 
form. Since $g$ may not be a shift-kernel, we also get that $Y(t)$ is a \vv\ process with a  kernel function which is not of shift-kernel type. If $g$ is  a shift-kernel, then also $Y(t)$ will be a (semi) stationary \V (or more precisely an \lss) process. In \cite{BSZ-02} one has suggested Ornstein--Uhlenbeck processes driven by fractional Brownian motion as a model for the dynamics of temperature. Our analysis here links their results to \LSS processes. See also \citet{Barndorff-Basse11} for a general treatment of so-called quasi Ornstein-Uhlenbeck  processes and references to the literature in this field.

\subsubsection{A Volterra process as integrand}\label{volint}
It is natural in the present
context to consider the question of the characteristics of integrals of Volterra processes with
respect to Volterra processes. We shall not aim here to give a comprehensive discussion of
this point, but will restrict to an illustration in the \VB\ case.
 This will be connected to chaos expansions with respect to $X$ and, as we shall see, this \lq\lq double integral\rq\rq\  with respect to $X$ gives an element in second chaos, as expected.

 Throughout this section we work under Assumption A and $X$ is given by \eqref{def-bss}. Further assume that $\sigma(t)=1$, i.e., there is no stochastic volatility, 
and that $s\mapsto g(t,s)$ is a measurable function on $[0, t]$ where $g(s,s)$ is well-defined for all $s \in [0, t]$. 
Consider the integrand 
$$
Y(s)=\int_{0}^sh(s,v)\,dX(v)\,,
$$
for some sufficiently regular function $h\in\mathcal{I}^X(0,s)$.  We want to compute the integral
$$
\int_0^tY(s)\,dX(s)\,
$$
by using our theory for stochastic integration.

By Definition~\ref{defbm}  we find that 
$$
\int_0^tY(s)\,dX(s)=\int_0^t\mathcal{K}_g(Y)(t,s)\,\delta B(s)+\int_0^tD_s\left\{\mathcal{K}_g(Y)(t,s)\right\}\,ds\,,
$$
where
$$
Y(s)=\int_0^s\mathcal{K}_g(h)(s,v)\,dB(v)\,.
$$
First, let us compute $\mathcal{K}_g(Y)(t,s)$: By definition we have
\begin{align*}
\mathcal{K}_g(Y)(t,s)&=Y(s)g(t,s)+\int_s^t(Y(u)-Y(s))\,g(du,s) \\
&=\int_0^s\mathcal{K}_g(h)(s,v)g(t,s)\,dB(v)
\\
&\qquad 
+\int_s^t  \left( \int_0^u\mathcal{K}_g(h)(u,v)\,dB(v)-\int_0^s\mathcal{K}_g{ (h)}(s,v)\,dB(v)\, \right) g(du,s) \\
&=\int_0^s\mathcal{K}_g(h)(s,v)g(t,s)\,dB(v)+\int_{s}^t\int_0^s\mathcal{K}_g(h)(u,v)-\mathcal{K}_g(h)(s,v)\,dB(v)\,g(du,s) \\
&\qquad+\int_{s}^t\int_s^u\mathcal{K}_g(h)(u,v)\,dB(v)\,g(du,s) \\
&=\int_0^s\mathcal{K}_g(h)(s,v)g(t,s)\,dB(v)+\int_0^s\int_s^t\mathcal{K}_g(h)(u,v)-\mathcal{K}_g(h)(s,v)\,g(du,s)\,dB(v) \\
&\qquad+\int_s^t\int_v^t\mathcal{K}_g(h)(u,v)\,g(du,s)\,dB(v)\,.
\end{align*}
In the last equality we applied the stochastic Fubini theorem, cf.\ e.g.\ \citet[Theorem IV.65]{P}, which holds subject to weak regularity conditions. Observe that applying the $\mathcal{K}_g$ operator to the function $s\mapsto\mathcal{K}_g(h)(s,v)$ for fixed $v$, yields
$$
\mathcal{K}_g(\mathcal{K}_g(h)(\cdot,v))(t,s)=\mathcal{K}_g(h)(s,v)g(t,s)+\int_s^t(\mathcal{K}_g(h)(u,v)-\mathcal{K}_g(h)(s,v))\,g(du,s)\,.
$$ 
Hence,
\begin{align*}
\mathcal{K}_g(Y)(t,s)&=\int_0^s\mathcal{K}_g(\mathcal{K}_g(h)(\cdot,v))(t,s)\,dB(v)+\int_s^t\int_v^t\mathcal{K}_g(h)(u,v)\,g(du,s)\,dB(v) \\
&=\int_0^t\widetilde{\mathcal{K}}_{g,h}(s,v,t)\,dB(v)\,,
\end{align*}
where we have introduced the definition
\begin{equation}
\widetilde{\mathcal{K}}_{g,h}(s,v,t)=\mathbb{I}_{\{v\leq s\}}\mathcal{K}_g(\mathcal{K}_g(h)(\cdot,v))(t,s)+\mathbb{I}_{\{v>s\}}
\int_v^t\mathcal{K}_g(h)(u,v)\,g(du,s)\,.
\end{equation}
By properties of the Malliavin derivative, see \citet[p.~25]{N}, we have for $s\leq t$
\begin{align*}
D_s\mathcal{K}_g(Y)(t,s)&=D_s\int_0^t\widetilde{\mathcal{K}}_{g,h}(s,v,t)\,dB(v)=\widetilde{\mathcal{K}}_{g,h}(s,s,t)\,.
\end{align*}
Hence, we find that (after using the stochastic Fubini theorem, see e.g.\  \citet[Proposition 2.6]{NZ88}) 
\begin{align*}
\int_0^tY(s)\,dX(s)&=\int_0^t\int_0^t\widetilde{\mathcal{K}}_{g,h}(s,v,t)\,dB(v)\,\delta B(s)+\int_0^t\widetilde{\mathcal{K}}_{g,h}(s,s,t)\,ds \\
&=\int_0^t\int_0^s\widetilde{\mathcal{K}}_{g,h}(s,v,t)\,dB(v)\,dB(s)+\int_0^t\int_s^t\widetilde{\mathcal{K}}_{g,h}(s,v,t)\,dB(v)\,\delta B(s) \\
&\qquad+\int_0^t\widetilde{\mathcal{K}}_{g,h}(s,s,t)\,ds \\
&=\int_0^t\int_0^s\widetilde{\mathcal{K}}_{g,h}(s,v,t)\,dB(v)\,dB(s)+\int_0^t\int_0^v\widetilde{\mathcal{K}}_{g,h}(s,v,t)\,dB(s)\,d B(v) \\
&\qquad+\int_0^t\widetilde{\mathcal{K}}_{g,h}(s,s,t)\,ds \\
&=\int_0^t\int_0^s\left(\widetilde{\mathcal{K}}_{g,h}(s,v,t)+\widetilde{\mathcal{K}}_{g,h}(v,s,t)\right)\,dB(v)\,dB(s)+\int_0^t\widetilde{\mathcal{K}}_{g,h}(s,s,t)\,ds\,.
\end{align*}
Hence, we have the explicit representation of an integral of a Volterra process $Y(t)$ with respect to another Volterra process $X(t)$ in terms of a sum of a second and zeroth order chaos.

 From the general result above we deduce a representation result for $\int_0^t X(s) dX(s)$ next. 
Observe that for $h=1$ we have $Y(t)=X(t)$. Furthermore, 
$
\mathcal{K}_g(1)(u,v)=g(u,v)$. 
Thus, as long as $u\mapsto g(u,v)$ is in the domain of $\mathcal{K}_g$, we find
$$
\mathcal{K}_g(\mathcal{K}_g(1)(\cdot,v))(t,s)=\mathcal{K}_g(g(\cdot,v))(t,s)=g(s,v)g(t,s)+\int_s^t(g(u,v)-g(s,v))\,g(du,s)\,,
$$
and
$$
\int_v^t\mathcal{K}_g(1)(u,v)\,g(du,s)=\int_v^tg(u,v)\,g(du,s)\,,
$$
and hence we 
have 
\begin{align*}
\widetilde{\mathcal{K}}_{g,1}(s,v,t)
&= \mathbb{I}_{\{v\leq s\}}\left(g(s,v)g(t,s)+\int_s^t(g(u,v)-g(s,v))\,g(du,s)\right)
+
\mathbb{I}_{\{v>s\}} \int_v^tg(u,v)\,g(du,s)\\
&= \mathbb{I}_{\{v\leq s\}}\left(g(s,v)g(s,s)+\int_s^tg(u,v)\,g(du,s)\right)
+
\mathbb{I}_{\{v>s\}} \int_v^tg(u,v)\,g(du,s),
\end{align*}
and $\widetilde{\mathcal{K}}_{g,1}(s,s,t)
= 
g^2(s,s)+\int_s^tg(u,s)g(du,s)$\,. 
Therefore, we
obtain
\begin{align*}
&\widetilde{\mathcal{K}}_{g,1}(s,v,t)+\widetilde{\mathcal{K}}_{g,1}(v,s,t)\\
&= \mathbb{I}_{\{v\leq s\}}\left(g(s,v)g(s,s)+\int_s^tg(u,v)\,g(du,s)\right)
+
\mathbb{I}_{\{v>s\}} \int_v^tg(u,v)\,g(du,s)\\
&\qquad + \mathbb{I}_{\{s\leq v\}}\left(g(v,s)g(v,v)+\int_v^tg(u,s)\,g(du,v)\right)
+
\mathbb{I}_{\{s>v\}} \int_s^tg(u,s)\,g(du,v).
\end{align*}
Altogether, we get
\begin{align*}
\int_0^tX(s)\,dX(s) &= \int_0^t\int_0^s\left(\widetilde{\mathcal{K}}_{g,1}(s,v,t)+\widetilde{\mathcal{K}}_{g,1}(v,s,t)\right)\,dB(v)\,dB(s)+\int_0^t\widetilde{\mathcal{K}}_{g,1}(s,s,t)\,ds.
\end{align*}

Let us compare the previous result with $\frac12 X^2(t)$. By again using integration-by-parts in Proposition~\ref{prop2}, we have
\begin{align*}
X^2(t)&=\left(\int_0^tg(t,v)\,dB(v)\right)^2 
=\left(\int_0^tg(t,v)\,dB(v)\right)\left(\int_0^tg(t,s)\,dB(s)\right) \\
&=\int_0^tg(t,v)\left(\int_0^tg(t,s)\,dB(s)\right)\,\delta B(v)+\int_0^tg(t,v)D_v\left(\int_0^tg(t,s)\,dB(s)\right)\,dv \\
&=\int_0^tg(t,v)\int_0^vg(t,s)\,dB(s)\,dB(v)+\int_0^tg(t,v)\int_v^tg(t,s)\,dB(s)\,\delta B(v)+\int_0^tg^2(t,v)\,dv \\
&=\int_0^t\int_0^vg(t,v)g(t,s)\,dB(s)\,dB(v)+\int_0^t\int_0^vg(t,v)g(t,s)\,dB(s)\,dB(v)+\int_0^tg^2(t,v)\,dv\,.
\end{align*}
Here, we have used the Fubini theorem for double stochastic Brownian integrals once again. 
Hence, we have
\begin{align*}
  \frac12 X^2(t)
= \int_0^t\int_0^sg(t,s)g(t,v)\,dB(v)\,dB(s)+\frac12 \int_0^tg^2(t,s)\,ds\,.
\end{align*}

\section{Conclusion and outlook}\label{SectConcl}
In this paper we have developed a stochastic integration theory with respect to volatility modulated L\'{e}vy--driven Volterra (\V) processes. Our results extend previous work in the literature to allow for stochastic volatility and pure jump processes in the integrator. So far, we have treated the case of a \V process driven by a Brownian motion and  a pure jump process separately. In future research, we plan to address the problem of  accounting for a Brownian motion and a jump process simultaneously. 

Moreover, the results in this paper can be regarded as the key building block for developing a general integration theory for \V processes defined on the entire real line, i.e.~for processes of type 
\begin{align}\label{gencase}
X(t) = \int_{-\infty}^t g(t,s ) \sigma(s) d L(s), \qquad t \in \R.
\end{align}
Note that  stochastic  integrals of type \eqref{gencase} can be defined using the recent theory by 
\citet{BassePedersen2010}, who studied stochastic integration  with respect to so-called increment semimartingales on the real line.

\section*{Acknowledgement}
It is a pleasure to thank Giulia Di Nunno for interesting discussions.  F.~E.~Benth is grateful for the financial support from the project \lq\lq Energy Markets: Modelling, Optimization and Simulation\rq\rq (EMMOS) funded by the Norwegian Research Council under the grant eVita/205328. Financial support by the Center for Research in Econometric
Analysis of Time Series, CREATES, funded by the Danish National Research Foundation is gratefully
acknowledged by  A.~E.~D.~Veraart.

\bibliographystyle{agsm}
\bibliography{IntegrationBib}

\end{document}